\def\step{%
   \@ifnextchar[ \@step{\@noitemargtrue\@step[\@itemlabel]}}
\def\@step[#1]{\item[#1]\mbox{}\\\hspace*{\dimexpr-\labelwidth-\labelsep}}
\newtheorem{theorem}{Theorem}[section]
\newtheorem{corollary}[theorem]{Corollary}
\newtheorem{lemma}[theorem]{Lemma}
\theoremstyle{definition}
\newtheorem{definition}{Definition}
\newtheorem{remark}{Remark}
\numberwithin{equation}{section}
\newcommand\relphantom[1]{\mathrel{\phantom{#1}}}
\begin{document}

\address{School of Mathematics \\
           Korea Institute for Advanced Study, Seoul\\
            Republic of Korea}
   \email{qkrqowns@kias.re.kr}
\author{Bae Jun Park}

\title[maximal inequality]{ Some maximal inequalities   on Triebel-Lizorkin spaces for $p=\infty$}
\keywords{Maximal inequality, function spaces, Triebel-Lizorkin spaces}
\subjclass[msc2010]{42B24, 42B35}

\begin{abstract} 
In this work we give some maximal inequalities in Triebel-Lizorkin spaces, which are ``$\dot{F}_{\infty}^{s,q}$-variants" of Fefferman-Stein vector-valued maximal inequality and Peetre's maximal inequality. We will give some applications of the new maximal inequalities and discuss sharpness of some results.
\end{abstract}

\maketitle
\section{\textbf{Introduction}}

\subsection{Motivation}

Triebel-Lizorkin space $\dot{F}_{p}^{s,q}(\mathbb{R}^d)$ ( or $F_{p}^{s,q}(\mathbb{R}^d)$ )  provides a general framework that unifies the subject of classical function spaces such as $L^p$ spaces, Hardy spaces, Sobolev spaces, and $BMO$ spaces.
We recall 
\begin{align*}
& L^p\text{space} &\dot{{F}}_p^{0,2}(\mathbb{R}^d) = {F}_p^{0,2}(\mathbb{R}^d)=L^p(\mathbb{R}^d) & &1<p<\infty \\
&\text{Hardy space} &\dot{{F}}_p^{0,2}(\mathbb{R}^d)={H}^p(\mathbb{R}^d) , \quad   F_p^{0,2}(\mathbb{R}^d)=h^p(\mathbb{R}^d) & &0<p\leq 1\\
&\text{Sobolev space}&\dot{{F}}_p^{{s},2}(\mathbb{R}^d)=\dot{L}^p_{{s}}(\mathbb{R}^d), \quad  {F}_p^{{s},2}(\mathbb{R}^d)=L^p_{{s}}(\mathbb{R}^d)  & & {s} >0, 1<p<\infty\\
&BMO, bmo &\dot{{F}}_{{\infty}}^{0,2}(\mathbb{R}^d)=BMO(\mathbb{R}^d),  \quad  {{F}}_{{\infty}}^{0,2}(\mathbb{R}^d)=bmo(\mathbb{R}^d).
\end{align*}
Many results about $L^p(\mathbb{R}^d)$ and $H^p(\mathbb{R}^d)$ have been generalized to $\dot{F}_p^{s,q}(\mathbb{R}^d)$  and
 the Fefferman-Stein vector-valued maximal inequality in \cite{Fe_St0} is a key tool to develop a theory of Triebel-Lizorkin spaces. However the maximal inequality  cannot be adapted to $\dot{F}_{\infty}^{s,q}(\mathbb{R}^d)$.  The main purpose of this paper is to provide an analogous maximal inequality that can be readily used for $\dot{F}_{\infty}^{s,q}(\mathbb{R}^d)$.
 
For sake of simplicity we restrict ourselves in the sequel to function spaces defined on $\mathbb{R}^d$ and omit ``$\mathbb{R}^d$''.

\subsection{Maximal inequalities}

Let $\mathcal{M}$ be the Hardy-Littlewood maximal operator, defined by
\begin{equation*}
\mathcal{M}f(x):=\sup_{x\in Q}{\frac{1}{|Q|}\int_Q{|f(y)|}dy}
\end{equation*} where the supremum is taken over all cubes containing $x$,
 and for $0<t<\infty$ let $\mathcal{M}_tf:=\big(  \mathcal{M}(|f|^t) \big)^{1/t}$. 
Then the Fefferman-Stein vector-valued maximal inequality in \cite{Fe_St0} says  that
for $0<r<p,q<\infty$
\begin{equation}\label{hlmax}
\Big\Vert  \Big(\sum_{k}{\big(\mathcal{M}_{r}f_k\big)^q}\Big)^{1/{q}} \Big\Vert_{L^p} \lesssim  \Big\Vert \Big( \sum_{k}{|f_k|^q}  \Big)^{1/{q}}  \Big\Vert_{L^p}.
\end{equation} Here, the notation $``\lesssim"$ indicates that an unspecified constant, which may depend on $d, p, q, r$, is involved in the inequality.
Note that (\ref{hlmax}) also holds when $q=\infty$. 

Now for $k\in\mathbb{Z}$ and $\sigma>0$ we define the Peetre maximal operator $\mathfrak{M}_{\sigma,2^k}$ by \begin{equation*}
\mathfrak{M}_{\sigma,2^k}f(x):=\sup_{y\in\mathbb{R}^d}{\frac{|f(x-y)|}{(1+2^k|y|)^{\sigma}}}.
\end{equation*}
For $r>0$ let $\mathcal{E}(r)$ be the space of tempered distributions whose Fourier transforms are supported in $\{\xi:|\xi|\leq 2r\}$.
As shown in \cite{Pe} and \cite[1.3.1]{Tr} one has the majorization \begin{equation}\label{maximalbound}
\mathfrak{M}_{d/r,2^k}f(x)\lesssim_r \mathcal{M}_rf(x), 
\end{equation}  if  $f\in \mathcal{E}(2^k)$.

Then  it follows via (\ref{hlmax}) that for $0<p<\infty$, $0<q\leq\infty$, and $0< r<p,q$,
\begin{equation}\label{max}
\Big\Vert  \Big(\sum_{k}{(\mathfrak{M}_{d/r,2^k}f_k)^q}\Big)^{1/{q}} \Big\Vert_{L^p} \lesssim  \Big\Vert \Big( \sum_{k}{|f_k|^q}  \Big)^{1/{q}}  \Big\Vert_{L^p}
\end{equation} if $f_k\in \mathcal{E}(2^k)$.
It is well known that $r<p$ is necessary and
Christ and Seeger \cite{Ch_Se} proved that for $q\leq p$, the condition $r<q$ in $(\ref{max})$ is a necessary condition  by using a random construction.

It is natural to ask whether the analogous maximal inequalities for $p=\infty$ hold. Clearly, (\ref{hlmax}) and (\ref{max}) hold for $p=q=\infty$ and the example in \cite[2.1.4]{Tr2} shows that those inequalities do not hold for $p=\infty$ and $0<q<\infty$. 

Now we consider  ``$\dot{F}_{\infty}^{s,q}$-variants", which are motivated by the definition of $\dot{F}_{\infty}^{s,q}$ (or $F_{\infty}^{s,q}$).
Let  $\mathcal{D}$ denote the set of all dyadic cubes in $\mathbb{R}^d$ and $\mathcal{D}_{k}$ the subset of $\mathcal{D}$ consisting of the cubes with side length $2^{-k}$ for $k\in\mathbb{Z}$. 
  For $Q\in \mathcal{D}$, denote the side length of $Q$ by $l(Q)$, lower left corner of $Q$ by $x_Q$, and the characteristic function of $Q$ by $\chi_Q$.

Note that $\Vert f\Vert_{\dot{F}_p^{0,q}}\approx \Vert \{\Pi_kf\}\Vert_{L^{p}(l^q)}$ for $p<\infty$ or for $p=q=\infty$, but
\begin{equation*}
\Vert f\Vert_{\dot{F}_{\infty}^{0,q}}\approx\sup_{P\in\mathcal{D}}{\Big(\frac{1}{|P|}\int_P{\sum_{k=-\log_2{l(P)}}^{\infty}{|\Pi_kf(x)|^q}}dx\Big)^{1/q}},\quad q<\infty
\end{equation*}  where $\Pi_k$ is a homogeneous Littlewood-Paley frequency decomposition, defined in Section \ref{application}. Here ``$\approx$'' means both ``$\lesssim$'' and ``$\gtrsim$''.

 One may wonder whether for $0<r<q<\infty$ 
 \begin{equation}\label{yyy}
 \sup_{P\in\mathcal{D}}{\Big(  \frac{1}{|P|}\int_P{  \sum_{k=-\log_2{l(P)}}^{\infty}{  \big( \mathcal{M}_rf_k(x)  \big)^{q}      }    }dx  \Big)^{1/q}} 
 \end{equation} can be dominated by 
\begin{equation}\label{uuu}
\sup_{P\in\mathcal{D}}{\Big(  \frac{1}{|P|}\int_P{  \sum_{k=-\log_2{l(P)}}^{\infty}{   |f_k(x)|^q   }    }dx  \Big)^{1/q}}
\end{equation} provided that $f_k\in \mathcal{E}(2^k)$. However, this does not hold;
\begin{theorem}\label{counterex}
Let $0<r,q<\infty$. Then there exists a sequence  $\{f_k\}$, with $f_k\in\mathcal{E}(2^k)$, so that $(\ref{uuu})<\infty$ but
$(\ref{yyy})=\infty$.

\end{theorem}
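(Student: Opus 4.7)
The plan is to construct $\{f_k\}$ as finite sums of translated Schwartz bumps $\phi(2^k(\cdot-y))$, where $\widehat\phi$ is supported in $|\xi|\le 2$ and $\phi(0)=1$ so that every such term lies in $\mathcal{E}(2^k)$. The idea is to exploit the mismatch between the super-polynomial decay of $|f_k|$ and the slower polynomial tail $(1+2^k|x-y|)^{-d/r}$ of $\mathcal{M}_r f_k$, which on a fixed test cube $P$ permits large cumulative contributions to $\sum_k (\mathcal{M}_r f_k)^q$ while $\sum_k |f_k|^q$ stays controlled. The precise configuration will depend on whether $r\ge q$ or $r<q$.

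When $r\ge q$ I would take a Triebel-type translation example: set $f_k(x)=\phi(x-x_k)$ for $k\ge 0$ and $f_k=0$ for $k<0$, where $\{x_k\}_{k\ge 0}$ enumerates $\mathbb{Z}^d$. The separation of the bumps forces $\sum_k|f_k(x)|^q\lesssim\sum_{j\in\mathbb{Z}^d}(1+|x-j|)^{-Nq}$ to be pointwise bounded for large $N$, so $(\ref{uuu})<\infty$ uniformly in $P$. On the other hand $\mathcal{M}_r f_k(x)\approx (1+|x-x_k|)^{-d/r}$, so $\sum_k(\mathcal{M}_r f_k(x))^q\sim\sum_{j\in\mathbb{Z}^d}(1+|x-j|)^{-dq/r}$, which diverges pointwise exactly when $dq/r\le d$, i.e.\ $q\le r$; choosing any dyadic $P$ with $l(P)\ge 1$ then gives $\int_P\sum_k(\mathcal{M}_r f_k)^q=\infty$, hence $(\ref{yyy})=\infty$.

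When $r<q$ the previous series converges pointwise, so a genuinely multi-scale construction is required. Fix a unit cube $P_0$ and, for each $k\ge 0$, place $N_k$ bumps of scale $2^{-k}$ on a sublattice $J_k\subset 2^{-k}\mathbb{Z}^d\cap P_0$, setting $f_k(x)=\sum_{j\in J_k}\phi(2^k(x-x_{k,j}))$. One computes $|P_0|^{-1}\int_{P_0}|f_k|^q\sim N_k2^{-kd}$, while averaging $|f_k|^r$ over $P_0$ yields the uniform lower bound $\mathcal{M}_r f_k\gtrsim (N_k2^{-kd})^{1/r}$ on $P_0$. Taking $N_k\sim 2^{kd}/k^\alpha$ with $\alpha$ slightly larger than $1$, the averages $N_k2^{-kd}\sim k^{-\alpha}$ are summable, so $(\ref{uuu})<\infty$, and one aims to force divergence of $\sum_k(N_k2^{-kd})^{q/r}$ to obtain $(\ref{yyy})=\infty$.

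The principal obstacle is precisely this case $r<q$: since $q/r>1$, the naive lower bound $(N_k2^{-kd})^{q/r}=k^{-\alpha q/r}$ is itself more summable than $k^{-\alpha}$, so the two series converge or diverge together and the direct averaging argument fails. Overcoming this requires either a sharper lower bound on $\mathcal{M}_r f_k$ — for instance a logarithmic improvement coming from the cumulative density contribution $\sum_j(1+2^k|x-x_{k,j}|)^{-d}$ when the bumps are packed densely — or a redesigned bump configuration, e.g.\ placing the bumps just outside a small test cube so that only their polynomial $\mathcal{M}_r$-tails reach it while the Schwartz values of $|f_k|$ there are negligible. Verifying that $(\ref{uuu})$ remains bounded simultaneously on every dyadic cube $P$ (and not just on $P_0$), which requires uniform control over the bumps that contribute to any given $P$, is a further bookkeeping step that I expect to carry most of the technical weight of the proof.
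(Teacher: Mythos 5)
Your construction for $r\ge q$ is fine, but that is the easy half; the substance of Theorem \ref{counterex} is precisely the regime $r<q$ (where the positive results, Theorem \ref{maximal1} and Corollary \ref{maximal2}, hold for the smaller operator $\mathcal{M}_r^{k,\epsilon}$), and there your argument has a genuine gap that you yourself identify: with $N_k$ bumps of scale $2^{-k}$ packed in a unit cube, the average of $|f_k|^q$ is $\sim N_k2^{-kd}$ while your lower bound for $(\mathcal{M}_rf_k)^q$ is $(N_k2^{-kd})^{q/r}$, and since $q/r>1$ the maximal side is always \emph{more} summable than the function side, so no choice of $N_k$ can make (\ref{uuu}) finite and (\ref{yyy}) infinite. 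Neither of your proposed rescues is carried out, and the ``logarithmic improvement'' one will not work as stated: for $r<q$ the lattice-tail series $\sum_j(1+2^k|x-x_{k,j}|)^{-dq/r}$ converges, so packing density inside a fixed cube cannot produce divergence. The proposal therefore does not prove the theorem.

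The paper's construction realizes, in a quantitative form, the second idea you only gesture at (bumps far from the test cube so that only the polynomial $\mathcal{M}_r$-tail reaches it), but with two features you are missing. First, it uses a single \emph{wide} bump per scale, $f_k(x)=\eta\big(2^{-k}(x-2^kk^{\alpha}e_1)\big)e^{2\pi i\langle x,2^ke_1\rangle}$, of spatial width $2^k$ centered at distance $2^kk^{\alpha}$ from the origin; the distance-to-width ratio is only $k^{\alpha}$, polynomial in $k$. Second, the divergence is produced in the $k$-summation at a \emph{fixed} cube, not by lattice counting: on $[0,1/2]^d$ one has $\mathcal{M}_rf_k(x)\gtrsim k^{-\alpha d/r}$ (average over a cube of side $\sim2^kk^{\alpha}$ reaching the bump), while Schwartz decay gives $|f_k(x)|\lesssim(1+|2^{-k}x-k^{\alpha}|)^{-M}$ with $M$ arbitrary, and the elementary lemma that $\sum_k(1+|2^{-k}x-k^{\alpha}|)^{-M}$ is bounded uniformly in $x$ settles (\ref{uuu}) for \emph{all} dyadic $P$ at once (the bookkeeping you deferred). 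Choosing $\alpha=r/(qd)$ makes $\sum_k k^{-\alpha dq/r}=\sum_k k^{-1}=\infty$, so (\ref{yyy}) diverges, and this works for every $0<r,q<\infty$ simultaneously --- no case split is needed. The essential point your plan lacks is this exploitation of the gap between super-polynomial (Schwartz) decay and the fixed polynomial rate $d/r$ of $\mathcal{M}_r$ \emph{along the scale parameter} $k$, tuned by the free exponent $\alpha$, rather than across a spatial lattice at a single scale.
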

In view of Theorem \ref{counterex} we will replace
$\mathcal{M}_r$  by an appropriate smaller maximal-type operator  so that $(\ref{yyy})$, with $\mathcal{M}_r$ replaced by the new maximal operator, is bounded by (\ref{uuu}).

\begin{definition}
For $\epsilon\geq0$, $r>0$, and $k\in\mathbb{Z}$, define 
\begin{align*}
\mathcal{M}_r^{k,\epsilon}f(x)&:=\sup_{2^kl(Q)\leq 1,x\in Q}{\Big( \frac{1}{|Q|}\int_{Q}{|f(y)|^r}dy  \Big)^{1/r}}\\
  &\relphantom{=}+\sup_{2^{k}l(Q)>1,x\in Q}{\big(2^kl(Q)\big)^{-\epsilon}\Big( \frac{1}{|Q|}\int_Q{|f(y)|^r}dy  \Big)^{1/r}}. \nonumber
\end{align*}
\end{definition}
We  observe that \begin{equation}\label{maxbound}
\mathcal{M}_r^{k,\epsilon}f(x) \lesssim \mathcal{M}_r^{k,0}f(x) \approx\mathcal{M}_rf(x).
\end{equation} 
Moreover, the pointwise estimate (\ref{maximalbound}) does not hold when we replace $\mathcal{M}_r$ by $\mathcal{M}_{r}^{k,\epsilon}$, but we will prove the following lemma, based on the idea in \cite{Pe}.
\begin{lemma}\label{lemgoal}
Let $A>0$ and suppose that $f_k\in \mathcal{E}(A2^k)$ for each $k\in \mathbb{Z}$. Then
\begin{equation*}
\mathfrak{M}_{d/r,2^k}f_k(x)\lesssim_{A,r} \mathcal{M}_t^{k,d(1/r-1/t)}f_k(x) \quad \text{for}\quad r<t,
\end{equation*} uniformly in $k$.
\end{lemma}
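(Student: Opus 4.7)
The approach is to revisit Peetre's derivation of \eqref{maximalbound} in \cite{Pe} and to quantify the tails so that the Hardy-Littlewood maximal function can be replaced by $\mathcal{M}_t^{k,\epsilon}$. The key input is the pointwise majorization that underlies \eqref{maximalbound}: for $f_k\in\mathcal{E}(A 2^k)$ and every $M>0$,
\[
|f_k(z)|^r \lesssim 2^{kd} \int_{\mathbb{R}^d} \frac{|f_k(w)|^r}{(1+2^k|z-w|)^M}\,dw
\]
with constant depending only on $A,r,M$ (see \cite[1.3.1]{Tr}); since this is scale-invariant in $k$, the uniformity in $k$ will come for free.

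Fix $x,y\in\mathbb{R}^d$, set $z:=x-y$, and dyadically decompose the integration domain into annuli $A_j:=\{w:2^{j-1}\le 1+2^k|z-w|\le 2^j\}$, $j\ge 0$. The geometric observation I would use is that each $A_j$ is contained in a cube $Q_j\ni x$ of side length $l(Q_j)\sim 2^{-k}(1+2^k|y|+2^j)$; since $2^kl(Q_j)\gtrsim 1$, the second half of the definition of $\mathcal{M}_t^{k,\epsilon}$ is always applicable on $Q_j$. Applying H\"older's inequality with exponent $t/r>1$ and then invoking the defining bound of $\mathcal{M}_t^{k,\epsilon}$,
\[
\int_{A_j}|f_k|^r \le |A_j|^{1-r/t}|Q_j|^{r/t}\bigl(2^kl(Q_j)\bigr)^{\epsilon r}\bigl(\mathcal{M}_t^{k,\epsilon}f_k(x)\bigr)^r.
\]
The choice $\epsilon=d(1/r-1/t)$ is exactly what makes $\epsilon r=d(1-r/t)$, so the three side-length factors telescope into $(1+2^k|y|+2^j)^d$; a direct substitution gives $\int_{A_j}|f_k|^r\lesssim 2^{jd(1-r/t)-kd}(1+2^k|y|+2^j)^d\,(\mathcal{M}_t^{k,\epsilon}f_k(x))^r$.

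Plugging this back into the pointwise majorization and splitting $(1+2^k|y|+2^j)^d\lesssim(1+2^k|y|)^d+2^{jd}$ produces two geometric series in $j$, both of which converge as soon as $M>d(2-r/t)$; this condition is free, since $M$ is at my disposal. Each sum is bounded by $(1+2^k|y|)^d\,(\mathcal{M}_t^{k,\epsilon}f_k(x))^r$, giving $|f_k(x-y)|\lesssim(1+2^k|y|)^{d/r}\mathcal{M}_t^{k,\epsilon}f_k(x)$; taking the supremum in $y$ completes the proof.

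The main obstacle, and the reason for the precise value $\epsilon=d(1/r-1/t)$, is to reconcile the growth factor $(2^kl(Q))^{\epsilon}$ that $\mathcal{M}_t^{k,\epsilon}$ pays on large cubes with the denominator $(1+2^k|y|)^{d/r}$ in $\mathfrak{M}_{d/r,2^k}$. Any smaller choice of $\epsilon$ would break the telescoping in the H\"older step and leave uncontrolled contributions from the far annuli.
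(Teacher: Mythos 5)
Your argument is correct, and it takes a genuinely different route from the paper's. The paper follows Peetre's original absorption argument: by the mean value theorem one writes $|f_k(z)|$ as a small multiple of $\sup_{|y|<2^{-k}\delta}|\nabla f_k(z-y)|$ plus an $L^t$ average over a ball of radius $2^{-k}\delta$; the average term is bounded by $\delta^{-d/t}(1+2^k|z|)^{\epsilon+d/t}\mathcal{M}_t^{k,\epsilon}f_k(0)$ with $\epsilon+d/t=d/r$, while the gradient term is controlled by Peetre's Bernstein-type estimate $C_{A,r}\,2^k\,\mathfrak{M}_{d/r,2^k}f_k(0)$ and then absorbed into the left side by choosing $\delta$ small. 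You instead start from the Plancherel--Polya--Nikol'skij pointwise majorization $|f_k(z)|^r\lesssim 2^{kd}\int|f_k(w)|^r(1+2^k|z-w|)^{-M}\,dw$, decompose into annuli, use H\"older with exponent $t/r$ and the defining bound of $\mathcal{M}_t^{k,\epsilon}$ on the enclosing cube, and sum the resulting geometric series. The exponent arithmetic is the same in both treatments (the identity $\epsilon r + dr/t = d$, equivalently $\epsilon+d/t=d/r$, is what makes things close), so the precise value of $\epsilon$ plays the same role; but your version trades Peetre's gradient lemma and the absorption/bootstrap step for the PPN majorant as a black box and a direct summation, which avoids the usual a priori finiteness issue for $\mathfrak{M}_{d/r,2^k}f_k$ that the absorption argument implicitly assumes. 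Two small points to tidy up: your annuli should start at $j\geq 1$ (for $j=0$ the set $\{w:2^{-1}\le 1+2^k|z-w|\le 1\}$ is a single point), and when you invoke the second half of the definition of $\mathcal{M}_t^{k,\epsilon}$ on $Q_j$ you only have $2^k l(Q_j)\gtrsim 1$ rather than $2^k l(Q_j)>1$; this is harmless since for $c_0\le 2^k l(Q)\le 1$ the first half already bounds the average and pays at most a factor $c_0^{-\epsilon}$ relative to the inequality you want, but it should be said.
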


Now we state the maximal inequality of $\mathcal{M}_r^{k,\epsilon}$, which is  our main result.
\begin{theorem}\label{maximal1}
Let $0<r<q<\infty$ and $\epsilon>0$. Suppose that  $f_k\in\mathcal{E}(A2^k)$ for some $A>0$ and for all $k\in\mathbb{Z}$.
 Let $\mu\in\mathbb{Z}$ and $P\in\mathcal{D}_{\mu}$.
Then 
\begin{equation}
{\Big(  \frac{1}{|P|}\int_P{  \sum_{k=\mu}^{\infty}{  \big( \mathcal{M}_r^{k,\epsilon}f_k(x)  \big)^{q}      }    }dx  \Big)^{1/q}} \lesssim_{A,r} \sup_{R\in\mathcal{D}_{\mu}}{\Big(  \frac{1}{|R|}\int_R{  \sum_{k=\mu}^{\infty}{   |f_k(x)|^q   }    }dx  \Big)^{1/q}}. 
\end{equation}
Here, the implicit constant of the inequality is independent of $\mu$ and $P$.
\end{theorem}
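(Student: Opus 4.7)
Denote the right-hand side by $\mathcal{A}$. The plan is to split, for each $x\in P$ and $k\geq\mu$, the quantity $\mathcal{M}_r^{k,\epsilon}f_k(x)$ into a \emph{local} contribution coming from cubes $Q\ni x$ with $l(Q)\leq l(P)=2^{-\mu}$ and a \emph{global} contribution from cubes with $l(Q)>l(P)$. Because $k\geq\mu$ forces $2^{-k}\leq l(P)$, the first supremum in the definition of $\mathcal{M}_r^{k,\epsilon}$ is entirely local, and in the second supremum the factor $(2^kl(Q))^{-\epsilon}$ is always $\leq 1$ whenever it appears.

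For the local contribution, any cube $Q\ni x$ with $l(Q)\leq l(P)$ is contained in $3P$, so pointwise
\begin{equation*}
[\mathrm{Local}]_k(x)\lesssim \mathcal{M}_r(f_k\chi_{3P})(x).
\end{equation*}
The Fefferman--Stein vector-valued inequality \eqref{hlmax} (applicable since $r<q$) then yields
\begin{equation*}
\int_P\sum_{k\geq\mu}[\mathrm{Local}]_k(x)^q\,dx\lesssim \int_{3P}\sum_{k\geq\mu}|f_k(y)|^q\,dy\lesssim |P|\,\mathcal{A}^q,
\end{equation*}
the last step using that $3P$ is covered by $O(1)$ cubes from $\mathcal{D}_\mu$.

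For the global part, I first reduce (at the cost of a fixed constant, by dominating any cube containing a point of $P$ by $O(1)$ dyadic ancestors of $P$ together with their neighbors) to dyadic cubes of the form $P^{(m)}$ for $m\geq 1$, where $P^{(m)}$ is the $m$-th dyadic ancestor of $P$. Writing $a_{k,m}:=\bigl(\tfrac{1}{|P^{(m)}|}\int_{P^{(m)}}|f_k|^r\bigr)^{1/r}$, the global contribution is bounded pointwise on $P$ by $\sup_{m\geq 1}2^{-\epsilon(k-\mu+m)}a_{k,m}$. Dominating $\sup$ by $\ell^q$-sum, and applying Jensen's inequality (valid since $q/r>1$) to pass from $L^r$- to $L^q$-averages, I obtain
\begin{equation*}
\sum_{k\geq\mu}[\mathrm{Global}]_k(x)^q\lesssim \sum_{m\geq 1}2^{-q\epsilon m}\cdot\frac{1}{|P^{(m)}|}\int_{P^{(m)}}\sum_{k\geq\mu}|f_k(y)|^q\,dy.
\end{equation*}
Since $P^{(m)}$ is a disjoint union of cubes from $\mathcal{D}_\mu$, the inner average is $\leq \mathcal{A}^q$; the geometric series $\sum_{m\geq 1}2^{-q\epsilon m}$ converges because $\epsilon>0$, and integration over $P$ combined with the local estimate completes the proof.

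The step I expect to be the main obstacle is the global part: the decay factor $(2^kl(Q))^{-\epsilon}$ must survive the passage from $\sup$ to $\ell^q$-sum over the dyadic-ancestor index $m$ in order to produce a summable series. This is precisely where the hypothesis $\epsilon>0$ is indispensable and is what separates the present positive result from the negative one in Theorem \ref{counterex}.
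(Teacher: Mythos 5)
Your argument is correct, and it is genuinely different from (and in some respects stronger than) the proof in the paper. The paper's proof leans heavily on the band-limited hypothesis: after normalizing $A=2^{-2}$ it expands $f_k$ through the sampling identity (\ref{pointdecompose}), proves the key estimate (\ref{sub}) controlling $\ell^q$-sums of the samples $f_k(x_Q)$ by $L^q$-averages via the Peetre maximal operator and (\ref{qwqw}), and then splits into near and far translates $P+l(P)m$, treating the far part through the decay of $\psi_k$ together with the $(2^kl(Q))^{-\epsilon}$ factor. Your proof bypasses all of this: splitting the supremum at scale $l(P)$, handling the cubes with $l(Q)\le l(P)$ by the scalar Hardy--Littlewood maximal theorem in $L^{q/r}$ applied to $f_k\chi_{3P}$ for each $k$ (your appeal to (\ref{hlmax}) with $p=q$ amounts to exactly this), and handling the cubes with $l(Q)>l(P)$ by the pointwise bound through dyadic ancestors, the crude $\sup\le\ell^q$ step, Jensen with exponent $q/r>1$, and the geometric series in $m$ furnished by $\epsilon>0$. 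Two cosmetic points: one should work with $P^{(m)}$ together with its $3^d-1$ same-generation dyadic neighbours (as you indicate parenthetically), since a cube meeting $P$ with $2^{m-1}l(P)\le l(Q)<2^{m}l(P)$ lies in that union and has comparable measure, and each such neighbour is again a disjoint union of cubes of $\mathcal{D}_\mu$, so the inner average is still at most $\mathcal{A}^q$. What your route buys is notable: nowhere do you use $f_k\in\mathcal{E}(A2^k)$, so you in fact prove the inequality for arbitrary measurable $f_k$ with a constant independent of $A$; the spectral hypothesis is only needed afterwards, in Lemma \ref{lemgoal}, to pass to the Peetre maximal operator in Corollary \ref{maximal2}. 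What the paper's route buys is the sampling estimate (\ref{sub}) and the Frazier--Jawerth-type discretization machinery, which sit naturally alongside the applications in Section \ref{application}, but for Theorem \ref{maximal1} itself your elementary real-variable argument suffices and is cleaner; the role of $\epsilon>0$, as you note, is precisely to make the ancestor sum converge, consistent with the failure described in Theorem \ref{counterex} when no decay is present.
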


Then as a consequence of Lemma \ref{lemgoal} and Theorem \ref{maximal1} the following result holds. 
\begin{corollary}\label{maximal2}
Let $0<r<q<\infty$. Suppose that for  $f_k\in\mathcal{E}(A2^k)$  for some $A>0$ and for all $k\in\mathbb{Z}$. Let $\mu\in\mathbb{Z}$ and $P\in\mathcal{D}_{\mu}$.
Then 
\begin{equation*}
{\Big(  \frac{1}{|P|}\int_P{  \sum_{k=\mu}^{\infty}{  \big(\mathfrak{M}_{d/r,2^k}f_k(x) \big)^{q}      }    }dx  \Big)^{1/q}} \label{boundee} \lesssim_{A,r} \sup_{R\in\mathcal{D}_{\mu}}{\Big(  \frac{1}{|R|}\int_R{  \sum_{k=\mu}^{\infty}{   |f_k(x)|^q   }    }dx  \Big)^{1/q}}.
\end{equation*}
Here, the implicit constant of the inequality is independent of $\mu$ and $P$.
\end{corollary}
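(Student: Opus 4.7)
The plan is to deduce Corollary \ref{maximal2} by composing the pointwise majorization in Lemma \ref{lemgoal} with the integrated maximal inequality in Theorem \ref{maximal1}. The essential move is to introduce an intermediate exponent $t$ with $r < t < q$, which exists since we are in the regime $r < q$. With this choice, set $\epsilon := d(1/r - 1/t)$, which is strictly positive; this places us squarely in the hypotheses of Theorem \ref{maximal1}.

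First I would apply Lemma \ref{lemgoal} to each $f_k$: since $f_k \in \mathcal{E}(A 2^k)$, the lemma yields the pointwise bound
\begin{equation*}
\mathfrak{M}_{d/r, 2^k} f_k(x) \lesssim_{A,r} \mathcal{M}_t^{k, \epsilon} f_k(x) \qquad \text{for all } x \in \mathbb{R}^d,
\end{equation*}
with an implicit constant independent of $k$. Raising to the $q$-th power, summing over $k \ge \mu$, integrating over $P$, and normalizing by $|P|$ then gives
\begin{equation*}
\Big( \frac{1}{|P|} \int_P \sum_{k=\mu}^{\infty} \big(\mathfrak{M}_{d/r, 2^k} f_k(x)\big)^q \, dx \Big)^{1/q} \lesssim_{A,r} \Big( \frac{1}{|P|} \int_P \sum_{k=\mu}^{\infty} \big(\mathcal{M}_t^{k, \epsilon} f_k(x)\big)^q \, dx \Big)^{1/q}.
\end{equation*}

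Second, I would invoke Theorem \ref{maximal1} with the parameter $t$ in place of $r$ and with this same $\epsilon > 0$. Since $0 < t < q < \infty$ and $f_k \in \mathcal{E}(A 2^k)$, the theorem's hypotheses are satisfied, and the right-hand side above is dominated by the supremum on the right of the statement of Corollary \ref{maximal2}, with constant independent of $\mu$ and $P$. Chaining the two estimates finishes the proof.

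In short, the proof is a mechanical combination: Lemma \ref{lemgoal} converts the Peetre maximal operator into the cube-averaged operator $\mathcal{M}_t^{k,\epsilon}$ at the cost of enlarging the exponent from $r$ to $t$, and Theorem \ref{maximal1} then handles the latter. The only real decision is the choice of $t \in (r,q)$, which is forced to be possible precisely by the hypothesis $r < q$; there is no genuine obstacle beyond this bookkeeping.
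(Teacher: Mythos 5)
Your proposal is correct and is exactly the route the paper intends: the corollary is stated there as a direct consequence of Lemma \ref{lemgoal} and Theorem \ref{maximal1}, and choosing an intermediate exponent $t\in(r,q)$ with $\epsilon=d(1/r-1/t)>0$ is the natural (and only) bookkeeping step needed to chain them. Nothing further is required.
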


\begin{remark}
 Corollary \ref{maximal2} is sharp in the sense that if $r\geq q$ then there exists a sequence $\{f_k\}$ in $\mathcal{E}(2^k)$ for which the inequality does not hold. For details see Section \ref{example}.
\end{remark}

It has been observed in \cite{Bu_Ta} and \cite{Se2} that weaker versions of maximal inequalities for $\mathfrak{M}_{\sigma,2^k}$ hold, namely that the left hand side of the asserted inequality in Corollary \ref{maximal2} is bounded by the supremum over arbitrary dyadic cubes, not over $R\in\mathcal{D}_{\mu}$, assuming $\sigma$ is large enough. That is, we provide improvements by deriving $``\sup_{R\in\mathcal{D}_{\mu}}$'' and by giving the optimal range $\sigma(=d/r)>d/q$.

As an application of Corollary \ref{maximal2}, for $\mu\in\mathbb{Z}$, $q_1<q_2<\infty$, and $\mathbf{f}:=\{f_k\}_{k\in\mathbb{Z}}$ one has
\begin{equation}\label{embinf}
\mathcal{V}_{\mu,q_2}[\mathbf{f}]\lesssim \mathcal{V}_{\mu,q_1}[\mathbf{f}],
\end{equation} provided that each $f_k$ is defined as in Corollary \ref{maximal2}, where
\begin{equation*}
\mathcal{V}_{\mu,q}[\mathbf{f}]:= \sup_{P\in\mathcal{D},l(P)\leq 2^{-\mu}}{\Big(  \frac{1}{|P|}\int_P{  \sum_{k=-\log_2{l(P)}}^{\infty}{   |f_k(x)|^q   }    }dx  \Big)^{1/q}}. \end{equation*}
Indeed,  for fixed $Q\in\mathcal{D}_{k}$ and   $\sigma>0$ 
\begin{equation}\label{qwqw}
\Vert \mathfrak{M}_{\sigma,2^k}f_k\Vert_{L^{\infty}(Q)}\lesssim_{\sigma} \inf_{y\in Q}{\mathfrak{M}_{\sigma,2^k}f_k(y)}
\end{equation} and then for $k\geq \mu$, $P\in\mathcal{D}_{\mu}$, and $\sigma>d/q_1$
\begin{align*}
  \Vert f_k\Vert_{L^{\infty}(P)}&\leq \sup_{Q\in\mathcal{D}_k, Q\subset P}{\big\Vert \mathfrak{M}_{\sigma,2^k}f_k\big\Vert_{L^{\infty}(Q)}}\\ 
  &\lesssim_{\sigma} \sup_{Q\in\mathcal{D}_k, Q\subset P}{\Big( \frac{1}{|Q|}\int_Q{\big( \mathfrak{M}_{\sigma,2^k}f_k(y)\big)^{q_1}}dy\Big)^{1/q_1}}    \lesssim \mathcal{V}_{\mu,q_1}[\mathbf{f}]
\end{align*}  where we used  Corollary \ref{maximal2} in the last inequality.
By applying $|f_k(x)|^{q_2}\lesssim \big(\mathcal{V}_{\mu,q_1}[\mathbf{f}]\big)^{q_2-q_1}|f_k(x)|^{q_1}$ for $x\in P$, one can prove (\ref{embinf}).
Furthermore, by using (\ref{qwqw}) and Corollary \ref{maximal2} one can also obtain that for $0<q<\infty$ and $\mu\in\mathbb{Z}$
\begin{equation*}
\sup_{k\geq \mu}\Vert f_k\Vert_{L^{\infty}}\lesssim \mathcal{V}_{\mu,q}[\mathbf{f}].
\end{equation*}
 Then together with (\ref{embinf}), this  implies $F_{\infty}^{s,q_1}\hookrightarrow F_{\infty}^{s,q_2}$ for all $0<q_1<q_2\leq\infty$.

This paper is organized as follows. We give some applications of the new maximal inequalities in Section \ref{application}.
We prove Lemma \ref{lemgoal} and Theorem \ref{maximal1} in Section \ref{proofmain} and construct some counter examples  in section \ref{example} to prove Theorem \ref{counterex} and to show the sharpness of Corollary \ref{maximal2}.

\section{Applications in $\dot{F}_{\infty}^{s,q}$ (or $F_{\infty}^{s,q}$)}\label{application}

Let $S$ denote the Schwartz space and $S'$ the space of tempered distributions. 
For the Fourier transform of $f$ we use the definition 
$\widehat{f}(\xi)=\int_{\mathbb{R}^d}{f(x) e^{-2\pi i\langle x,\xi \rangle}}dx $ and denote by $f^{\vee}$ the inverse Fourier transform of $f$. 
 Let $\phi$ be a smooth function so that $\widehat{\phi}$ is supported in $\{\xi:2^{-1}\leq |\xi|\leq 2\}$ and $\sum_{k\in\mathbb{Z}}{\widehat{\phi_k}(\xi)}=1$ for $\xi\not=0$ where $\phi_k:=2^{kd}\phi(2^k\cdot)$. 
 For each $k\in\mathbb{Z}$ we define convolution operators $\Pi_k$ by $\Pi_kf:=\phi_k\ast f$. Then for $0<p,q\leq \infty$ and $s\in \mathbb{R}$ the homogeneous Besov spaces $\dot{B}_p^{s,q}$ and Triebel-Lizorkin spaces $\dot{F}_p^{s,q}$  are defined as  subspaces of $S'/\mathcal{P}$ (tempered distributions modulo polynomials) with  (quasi-)norms  
 \begin{equation*}
 \Vert f\Vert_{\dot{B}_p^{s,q}}:=\big\Vert \{2^{s k}\Pi_kf\}_{k\in\mathbb{Z}}\big\Vert_{l^q(L^p)},
 \end{equation*} 
 \begin{equation*}
 \Vert f\Vert_{\dot{F}_p^{s,q}}:=\big\Vert \{2^{s k}\Pi_kf\}_{k\in\mathbb{Z}}\big\Vert_{L^p(l^q)}, \quad p<\infty ~\text{ or }~p=q=\infty 
 \end{equation*}
 respectively.
 When $p=\infty$ and $q<\infty$ we apply
\begin{equation*}
\Vert f\Vert_{\dot{F}_{\infty}^{s,q}}:=\sup_{P\in\mathcal{D}}\Big(\frac{1}{|P|}\int_P{\sum_{k=-\log_2{l(P)}}^{\infty}{2^{s kq}|\Pi_kf(x)|^q}}dx \Big)^{1/q}
\end{equation*} where $\mathcal{D}$ stands for the set of all dyadic cubes in $\mathbb{R}^d$. 

For inhomogeneous versions let $\widehat{\Phi}:=1-\sum_{k=1}^{\infty}\widehat{\phi_k}$ and define 
$\Lambda_0f:=\Phi\ast f$ and $\Lambda_kf:=\Pi_k f$ for $k\geq 1$.
Then the inhomogeneous spaces ${B}_p^{s,q}$ and $F_p^{s,q}$ are defined similarly.
For $0<p,q\leq \infty$ and $s\in \mathbb{R}$,  ${B}_p^{s,q}$ is the subspace of $S'$ with  (quasi-)norms   
\begin{equation*}
\Vert f\Vert_{{B}_p^{s,q}}:=\big\Vert \{2^{s k}\Lambda_kf\}_{k=0}^{\infty}\big\Vert_{l^q(L^p)}.
\end{equation*}
The inhomogeneous Triebel-Lizorkin spaces ${F}_{p}^{s,q}$ is a subspace of $S'$ with norms   
\begin{equation*}
\Vert f\Vert_{F_p^{s,q}}:=\big\Vert \{2^{s k}\Lambda_kf\}_{k=0}^{\infty}\big\Vert_{L^p(l^q)} \quad  p<\infty ~\text{ or }~p=q=\infty
\end{equation*} 
\begin{equation*}
\Vert f\Vert_{F_{\infty}^{s,q}}:=\Vert \Lambda_0f\Vert_{L^{\infty}}+\sup_{P\in\mathcal{D},l(P)<1}\Big(\frac{1}{|P|}\int_P{\sum_{k=-\log_2{l(P)}}^{\infty}{2^{s kq}|\Lambda_kf(x)|^q}}dx \Big)^{1/q}
\end{equation*} where  the supremum is taken over all dyadic cubes whose side length $l(P)$ is less than $1$.

\subsection{Mikhlin-H\"ormander multiplier theorems for $\dot{F}_{\infty}^{0,q}$ $(0<q<\infty)$}

For $m\in L^{\infty}$  the multiplier operator $T_m$ is defined as $T_mf(x):=\big( m\widehat{f}\big)^{\vee}(x)$.
The classical Mikhlin multiplier theorem \cite{Mik} states that if a function $m$, defined on $\mathbb{R}^d$, satisfies
\begin{equation*}
\big|  \partial_{\xi}^{\beta}m(\xi)  \big|\lesssim_{\beta}|\xi|^{-|\beta|}
\end{equation*} for all multi-indices $\beta$ with $|\beta|\leq \big[d/2\big]+1$, then the operator $T_m$ is bounded in $L^p$ for $1<p<\infty$.
In \cite{Ho} H\"ormander extends  Mikhlin's theorem to functions $m$ with the weaker condition
\begin{equation}\label{hocondition}
\mathcal{A}_{\alpha}[m]:=\sup_{k\in\mathbb{Z}}{\big\Vert m(2^k\cdot)\varphi\big\Vert_{L^2_{\alpha}}}<\infty
\end{equation} for $\alpha>d/2$ where $L^2_{\alpha}$ stands for the standard fractional Sobolev space, $\varphi$ is a cutoff function such that $0\leq \varphi\leq 1$, $\varphi=1$ on $1/2\leq |\xi|\leq 2$, and $Supp(\varphi)\subset \{1/4\leq |\xi|\leq 4\}$.
When $0<p\leq 1$ Calder\'on and Torchinsky \cite{Ca_To} proved that if (\ref{hocondition}) holds for $\alpha>d/p-d/2$, then $m$ is a Fourier multiplier of Hardy space $H^p$. A different proof was given by Taibleson and Weiss \cite{Ta_We}.

In \cite{Tr3} and \cite[p74]{Tr} Triebel extended these results to inhomogeneous Triebel-Lizorkin spaces and the arguments can also be applied to the homogeneous spaces. That is,
for $0<p,q<\infty$
if $m\in L^{\infty}$ satisfies (\ref{hocondition}) for $\alpha>d/\min{(1,p,q)}-d/2$
then
\begin{equation*}
\Vert T_m\Vert_{\dot{F}_p^{0,q}}\lesssim \mathcal{A}_{\alpha}[m]\Vert f\Vert_{\dot{F}_p^{0,q}}.
\end{equation*}

As an application of Theorem \ref{maximal1} one can extend the multiplier theorem to $\dot{F}_{\infty}^{0,q}$.
\begin{theorem}\label{oooo}
Let $0<q<\infty$. Assume $m\in L^{\infty}(\mathbb{R}^d)$ satisfies the condition (\ref{hocondition}) for $\alpha>d/\min{(1,q)}-d/2$.  Then $T_m$ is bounded in $\dot{F}_{\infty}^{0,q}$.  Moreover, in that case
\begin{equation*}
\big\Vert T_mf\big\Vert_{\dot{F}_{\infty}^{0,q}}\lesssim \mathcal{A}_{\alpha}[m] \Vert f\Vert_{\dot{F}_{\infty}^{0,q}}
\end{equation*}
\end{theorem}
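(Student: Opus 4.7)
The plan is to follow the classical template for the Mikhlin--H\"ormander multiplier theorem on Triebel--Lizorkin spaces, with Corollary \ref{maximal2} (equivalently Theorem \ref{maximal1} together with Lemma \ref{lemgoal}) playing the role of the Fefferman--Stein vector-valued maximal inequality.

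I would start from the characterisation
\begin{equation*}
\|T_mf\|_{\dot{F}_{\infty}^{0,q}}\approx\sup_{P\in\mathcal{D}}\Big(\frac{1}{|P|}\int_P\sum_{k=-\log_2{l(P)}}^{\infty}|\Pi_k T_mf(x)|^q\,dx\Big)^{1/q}
\end{equation*}
together with the commutation $\Pi_kT_mf=T_{m_k}(\Pi_kf)$, where $m_k(\xi):=m(\xi)\tilde{\varphi}_k(\xi)$ and $\tilde{\varphi}_k$ is a smooth cutoff equal to $1$ on the support of $\widehat{\phi_k}$ and supported in a slight enlargement of $\{|\xi|\approx 2^k\}$. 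The H\"ormander hypothesis on $m$ transfers to $\|m_k(2^k\cdot)\|_{L_{\alpha}^{2}}\lesssim\mathcal{A}_\alpha[m]$, and (by Plancherel) this yields the weighted $L^2$ kernel bound $\|K_k(1+2^k|\cdot|)^\alpha\|_{L^2}\lesssim 2^{kd/2}\mathcal{A}_\alpha[m]$, where $K_k:=(m_k)^\vee$.

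The central step is the Peetre-type pointwise estimate
\begin{equation*}
|T_{m_k}(\Pi_kf)(x)|\lesssim\mathcal{A}_\alpha[m]\,\mathfrak{M}_{d/r,2^k}(\Pi_kf)(x)
\end{equation*}
for $0<r\leq 1$ and $\alpha>d/r-d/2$, which is the Peetre-maximal analogue of the Calder\'on--Torchinsky refinement used in Hardy-space multiplier theorems. It is proved by decomposing $K_k$ dyadically into annular pieces on scale $2^{j-k}$, applying weighted-$L^2$ Cauchy--Schwarz on each annulus with the kernel bound above, and combining the quasi-sub-additivity $|a+b|^r\leq|a|^r+|b|^r$ (available for $r\leq 1$) with a local Plancherel--P\'olya control of the band-limited factor $\Pi_kf$. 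Once this is in hand, Lemma \ref{lemgoal} gives $\mathfrak{M}_{d/r,2^k}(\Pi_kf)(x)\lesssim\mathcal{M}_t^{k,d(1/r-1/t)}(\Pi_kf)(x)$ for any $t>r$, and Theorem \ref{maximal1} with $\epsilon=d(1/r-1/t)>0$ and $r<t<q$ then produces
\begin{equation*}
\|T_mf\|_{\dot{F}_{\infty}^{0,q}}\lesssim\mathcal{A}_\alpha[m]\sup_{R\in\mathcal{D}}\Big(\frac{1}{|R|}\int_R\sum_{k=-\log_2{l(R)}}^{\infty}|\Pi_kf|^q\,dx\Big)^{1/q}\approx\mathcal{A}_\alpha[m]\|f\|_{\dot{F}_{\infty}^{0,q}}.
\end{equation*}
The hypothesis $\alpha>d/\min(1,q)-d/2$ is precisely what is required in order to pick $r\in(0,\min(1,q))$ with $\alpha>d/r-d/2$ and then any $t\in(r,q)$.

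The main obstacle is the refined pointwise estimate with the sharp threshold $\alpha>d/r-d/2$ for $r\leq 1$. A direct Cauchy--Schwarz applied to $\int|K_k(y)|(1+2^k|y|)^{d/r}\,dy$ only produces the weaker $\alpha>d/r+d/2$, which would match the hypothesis only in the easy range $q\geq 2$; bridging the remaining gap of $d$ requires the quasi-sub-additivity for $r\leq 1$ to interact correctly with the band-limited structure of $\Pi_kf$, exactly in the spirit of the Calder\'on--Torchinsky theorem on $H^p$.
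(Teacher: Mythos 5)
Your reduction framework is the right one and matches how the result is meant to be used: commute $\Pi_k$ with $T_m$, majorize by a maximal function of $\Pi_kf$, and invoke Lemma \ref{lemgoal} and Theorem \ref{maximal1} (note the paper itself gives no proof here, deferring to Theorems 2.3--2.4 of \cite{Park4}). The gap is your ``central step'': the pointwise bound $|T_{m_k}(\Pi_kf)(x)|\lesssim\mathcal{A}_{\alpha}[m]\,\mathfrak{M}_{d/r,2^k}(\Pi_kf)(x)$ for $\alpha>d/r-d/2$, $r\le 1$, is false, and no combination of quasi-subadditivity and Plancherel--P\'olya can produce it, because the obstruction is not technical but structural. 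By scaling take $k=0$, fix $\xi_0$ with $|\xi_0|=1$ and a Schwartz $\phi$ with $\widehat{\phi}$ supported in a tiny ball, and for large $j$ put $K(y):=2^{-j\alpha}e^{2\pi i\langle y,\xi_0\rangle}\phi(y-y_j)$ with $|y_j|\approx 2^{j}$, $m:=\widehat{K}$, and $g(y):=2^{jd/r}e^{2\pi i\langle y,\xi_0\rangle}\overline{\phi(-y-y_j)}$. Then $m$ is supported in a fixed annulus with $\mathcal{A}_{\alpha}[m]\approx\Vert K(1+|\cdot|)^{\alpha}\Vert_{L^2}\approx 1$, $g$ is band-limited with $\mathfrak{M}_{d/r,1}\,g(0)\approx\mathcal{M}_rg(0)\approx 1$, and yet
\begin{equation*}
T_mg(0)=K\ast g(0)=2^{j(d/r-\alpha)}\Vert\phi\Vert_{L^2}^2\longrightarrow\infty\quad\text{whenever }\alpha<d/r.
\end{equation*}
So even the weaker majorization by $\mathcal{M}_r(\Pi_kf)$ requires $\alpha>d/r$ (and the Peetre-maximal version requires $\alpha>d/r+d/2$, as your Cauchy--Schwarz computation already indicates, by superposing such bumps over a lattice). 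The $d/2$ gain of Calder\'on--Torchinsky \cite{Ca_To}, \cite{Ta_We} is not obtained by a pointwise domination of $T_{m_k}$ applied to a single band by a maximal function of that band; it comes from global $L^2$/orthogonality input (atomic--molecular decompositions or interpolation). Indeed, if your claimed estimate were true, the $H^p$ multiplier theorem with $\alpha>d/p-d/2$ would follow in one line from Fefferman--Stein, which is a strong indication that it cannot hold.

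Concretely, running your scheme with the correct pointwise input ($\alpha>d/r$, $r<q$, provable by decomposing into cubes of side $2^{-k}$ and using Plancherel--P\'olya, much as you describe) yields Theorem \ref{oooo} only for $\alpha>d/\min(2,q)$ up to endpoints, i.e.\ it loses exactly the $-d/2$ that is the content of the theorem in the range $q\le 2$. To reach $\alpha>d/\min(1,q)-d/2$ you must exploit cancellation \emph{inside} the local averages $\frac{1}{|P|}\int_P\sum_k|\cdot|^q$ defining the $\dot{F}_{\infty}^{0,q}$ norm (a Plancherel/orthogonality argument across a spatial dyadic decomposition of the kernels $K_k$, or an interpolation between the crude $\alpha>d/r+d/2$ estimate and an $L^2$-based one), in combination with Theorem \ref{maximal1}; this is the route taken in \cite{Park4}, and it is where the real work of the theorem lies. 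As written, your proposal identifies the difficulty but asserts away precisely that step.
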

For the proof the reader is referred to Theorem 2.3 and Theorem 2.4  in \cite{Park4}.

\begin{remark}
We note that the above theorem also proves the $\dot{F}_{\infty}^{s,q}$  boundedness of $T_m$ for $s\in\mathbb{R}$
because the set of all Fourier multipliers for $\dot{F}_p^{s,q}$ is independent of $s$.
This immediately implies that 
$T_m$ maps $BMO^s( = \dot{F}_{\infty}^{s,2})$ into itself for $s\in\mathbb{R}$ and $\alpha>d/2$ where the Sobolev-BMO spaces $BMO^s$ were initially introduced by Neri \cite{Ne} and further developed by Strichartz \cite{St1}. Recently, the spaces attract some attention in connection with Cauchy problems for non-linear parabolic PDEs, especially Navier-Stokes equations. See \cite{Tr13, Tr15} for details.

\end{remark}

\subsection{ ${F}_{\infty}^{s,q}$-Boundedness of Pseudo-differential operators of type $(1,1)$}

For $0<p<\infty$ and $0<q\leq\infty$ Runst \cite{Ru}, Torres \cite{To}, and Johnsen \cite{Jo} proved the boundedness of pseudo-differential operators of type $(1,1)$ on Triebel-Lizorkin spaces.
Let ${\tau}_{p,q}=\max{\{0,d({1}/{p}-1),d({1}/{q}-1)\}}$ and ${\tau}_{p}=\max{\{0,d({1}/{p}-1)\}}$. 
Suppose $s,m\in\mathbb{R}$ and $a\in \mathcal{S}_{1,1}^{m}$. Then for $0<p< \infty$ and $0<q\leq\infty$
\begin{equation*}
T_{[a]}:F_p^{s+m,q}\to F_{p}^{s,q} \quad \text{ if }~s>\tau_{p,q}.
\end{equation*}
On the other hand,  for $0<p,q\leq \infty$
\begin{equation*}
T_{[a]}:B_{p}^{s+m,q} \to B_{p}^{s,q} \quad  \text{ if } ~s>{\tau}_{p}.
\end{equation*}

As an application of Theorem \ref{maximal1} one can extend the boundedness to $F_{\infty}^{s,q}$.
\begin{theorem}
Suppose $m\in\mathbb{R}$, $0<q< \infty$, and  $a\in\mathcal{S}_{1,1}^{m}$. If $s>{\tau}_{q}$  then 
\begin{equation*}
T_{[a]}:F_{\infty}^{s+m,q}\to F_{\infty}^{s,q}.
\end{equation*}
 \end{theorem}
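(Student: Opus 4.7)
The plan is to parallel the argument for the corresponding theorem in the case $p<\infty$ (Runst \cite{Ru}, Torres \cite{To}, Johnsen \cite{Jo}), with Corollary \ref{maximal2} taking the role of the Fefferman--Stein vector-valued maximal inequality. First I would decompose $T_{[a]}f=\sum_{k\geq 0}g_k$, where $g_k:=T_{[a]}(\Lambda_k f)=T_{[a_k]}f$ with $a_k(x,\xi):=a(x,\xi)\widehat{\phi_k}(\xi)$ for $k\geq 1$ (a low-frequency piece handles $k=0$ and contributes a bounded, smooth function that is absorbed in $\Vert\Lambda_0 T_{[a]}f\Vert_{L^\infty}$). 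From $a\in\mathcal{S}^{m}_{1,1}$, two standard properties then hold: (i) $g_k$ has Fourier transform supported in a ball $\{|\eta|\leq A\cdot 2^k\}$, so that $g_k\in\mathcal{E}(A\cdot 2^k)$; and (ii) the kernel bound $|K_k(x,y)|\lesssim_N 2^{k(m+d)}(1+2^k|y|)^{-N}$ for the Schwartz kernel of $T_{[a_k]}$ yields the pointwise estimate $|g_k(x)|\lesssim 2^{km}\,\mathfrak{M}_{\sigma,2^k}(\Lambda_k f)(x)$ for any $\sigma>0$.

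The crux of the argument is a ``molecular'' reconstruction estimate: for any sequence $\{h_k\}$ with $h_k\in\mathcal{E}(A\cdot 2^k)$ and any $s>\tau_q$,
$$\Big\Vert \sum_k h_k\Big\Vert_{F_\infty^{s,q}}\lesssim \sup_{P\in\mathcal{D}}\Big(\frac{1}{|P|}\int_P \sum_{k\geq \mu_P}2^{skq}|h_k(x)|^q\,dx\Big)^{1/q}+\Vert h_0\Vert_{L^\infty},\qquad \mu_P:=-\log_2 l(P).$$
This is the direct analogue of the molecular estimate underlying the $F_p^{s,q}$ case in \cite{Ru,To,Jo}. It is proved by expanding $\Lambda_j \sum_k h_k=\sum_{k\geq j-c}\Lambda_j h_k$ (using the Fourier ball support of $h_k$) and organizing the double sum over $j$ and $k$ by the shift $l=k-j\geq -c$. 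Minkowski's inequality (when $q\geq 1$) or the $q$-triangle inequality (when $0<q<1$) reduces matters to a Peetre-type quantity of the form $\{\mathfrak{M}_{\sigma,2^k}(h_k)\}_k$, and here Theorem \ref{maximal1} enters decisively (the Fefferman--Stein inequality being unavailable at $p=\infty$); the summability in $l$ is provided precisely by the assumption $s>\tau_q$.

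Applying the molecular estimate with $h_k=g_k$ and inserting the pointwise bound from (ii) yields
$$\|T_{[a]}f\|_{F_\infty^{s,q}}\lesssim \sup_{P\in\mathcal{D}}\Big(\frac{1}{|P|}\int_P \sum_{k\geq \mu_P}2^{(s+m)kq}\big|\mathfrak{M}_{\sigma,2^k}(\Lambda_k f)(x)\big|^q\,dx\Big)^{1/q}+\text{(low freq)}.$$
Since $\Lambda_k f\in\mathcal{E}(C\cdot 2^k)$, Corollary \ref{maximal2} with $\sigma=d/r$ chosen so that $0<r<q$ then majorizes the right-hand side by $\|f\|_{F_\infty^{s+m,q}}$, completing the proof.

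The main obstacle is the molecular reconstruction estimate. Because the Fourier supports of $h_k$ are \emph{balls} rather than annuli, the near-orthogonality between $\Lambda_j$ and $h_k$ is one-sided: $\Lambda_j h_k=0$ for $j>k+c$, but $\Lambda_j h_k$ can be nontrivial for every $j\leq k+c$, so the off-diagonal contributions must be summed without the luxury of vanishing moments on $h_k$. The threshold $s>\tau_q$ arises precisely to absorb the $q$-triangle loss of $d(1/q-1)$ derivatives when $0<q<1$, and the transition from the $L^p$-based Fefferman--Stein estimates used in the classical case to the supremum-over-dyadic-cubes formulation of the $F_\infty^{s,q}$ norm is exactly what Theorem \ref{maximal1} makes possible.
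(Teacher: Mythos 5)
First, note that the paper itself does not contain a proof of this theorem: it only records that the proof follows the scheme of Johnsen and Runst, with Theorem \ref{maximal1} used to derive an $F_{\infty}^{s,q}$-variant of Marschall's inequality, and defers the details to \cite{Park3}. Your overall architecture (frequency decomposition, a reconstruction estimate for band-limited pieces valid for $s>\tau_q$, and Corollary \ref{maximal2}/Theorem \ref{maximal1} replacing Fefferman--Stein at $p=\infty$) is consistent with that scheme, so the shape of your argument is right.

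However, there is a genuine gap at your step (i). For $a\in\mathcal{S}_{1,1}^m$ the function $g_k=T_{[a]}(\Lambda_kf)$ is \emph{not} band-limited: the symbol $a(x,\xi)$ has unbounded frequency content in the $x$-variable (the $S_{1,1}$ condition only bounds $|\partial_x^\alpha\partial_\xi^\beta a|$ by $\langle\xi\rangle^{m+|\alpha|-|\beta|}$, it imposes no support restriction on the $x$-Fourier transform), so $\widehat{g_k}$ is spread over all of $\mathbb{R}^d$ and $g_k\notin\mathcal{E}(A2^k)$ in general. This is exactly the defining difficulty of type $(1,1)$ operators (Ching-type symbols, the twisted diagonal): the $x$-oscillation of the symbol can shift output frequencies away from the input band, and it is the reason the condition $s>\tau_q$ (and the failure on $L^2$) appears at all. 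Consequently your molecular reconstruction estimate cannot be applied with $h_k=g_k$. The correct route, which is the actual content of the proof in \cite{Park3} following \cite{Jo,Ru}, is to perform a second Littlewood--Paley decomposition of the symbol in $x$, writing $a_k=\sum_j a_{j,k}$ with $x$-frequency $\sim 2^j$, for which $\Vert a_{j,k}\Vert_{L^\infty}\lesssim_M 2^{km}2^{-(j-k)M}$ when $j>k$ and the output $T_{[a_{j,k}]}(\Lambda_kf)$ has Fourier support in a ball of radius $\sim 2^{\max(j,k)}$; one then regroups the double sum by output frequency, proves Marschall-type pointwise bounds by Peetre maximal functions of $\Lambda_kf$ at the appropriate scales, and only then invokes the reconstruction estimate together with Theorem \ref{maximal1} in the supremum-over-dyadic-cubes form of the $F_\infty^{s,q}$ norm. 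Your sketch collapses these three regimes ($j\ll k$, $j\sim k$, $j\gg k$) into the single false assertion that $g_k$ lies in $\mathcal{E}(A2^k)$, thereby skipping the step where the type $(1,1)$ structure and the threshold $s>\tau_q$ actually have to be confronted.
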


The proof is based on the idea in \cite{Jo, Ru}, and  Theorem \ref{maximal1} is used to derive a ``$F_{\infty}^{s,q}$-variant'' of Marschall's inequality in \cite{Ma}. We refer to \cite{Park3} which contains detailed proofs and some sharpness results.

\subsection{Franke's embedding theorem for $F_{\infty}^{s,q}$}

We recall an extension of Sobolev embedding theorem to $B_p^{s,q}$ and $F_p^{s,q}$ spaces.
Let $-\infty<s_1<s_2<\infty$ and $0<p_0<p_1\leq \infty$ with $s_0-d/p_0=s_1-d/p_1$. 
Then $F_{p_0}^{s_0,q_0}\hookrightarrow F_{p_1}^{s_1,q_1}$ for $p_1<\infty$.

This implies
$F_{p_0}^{s_0,q}\hookrightarrow F_{p_1}^{s_1,p_1}=B_{p_1}^{s_1,p_1} $ and 
$B_{p_0}^{s_0,p_0}=F_{p_0}^{s_0,p_0}\hookrightarrow F_{p_1}^{s_1,q}$ for $0<q\leq \infty$. 
Jawerth \cite{Ja} and Franke \cite{Fr} showed that these embeddings are not optimal and improved that
\begin{equation}\label{franke}
F_{p_0}^{s_0,q}\hookrightarrow B_{p_1}^{s_1,p_0} \quad \text{and} \quad B_{p_0}^{s_0,p_1}\hookrightarrow F_{p_1}^{s_1,q}, \quad \text{ for } ~p_1<\infty.
\end{equation}
They used interpolation techniques and later Vyb\'iral \cite{Vy} gave a different proof of the embeddings by using discrete characterization of $B_p^{s,q}$ and $F_p^{s,q}$. 

Now,  as an direct consquence of (\ref{embinf}), we prove the analogue of (\ref{franke}) when $p_1=\infty$. 
\begin{theorem}\label{fremb}
Suppose $0<p_0<\infty$, $0<q\leq\infty$, $s_0\in\mathbb{R}$ and $s=s_0-d/p_0$. Then
\begin{equation}
B_{p_0}^{s_0,\infty}\hookrightarrow F_{\infty}^{s,q}.
\end{equation}
\end{theorem}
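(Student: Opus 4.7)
The plan is to prove the embedding by splitting into two regimes according to the size of $q$ relative to $p_0$: a direct computation for $0<q\le p_0$ and then (\ref{embinf}) to reach $q>p_0$.

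First I would handle the low-frequency piece and the case $q=\infty$ by Bernstein's inequality. Since $\widehat{\Lambda_k f}$ is supported in a ball of radius $\lesssim 2^k$, one has $\|\Lambda_k f\|_{L^{\infty}}\lesssim 2^{kd/p_0}\|\Lambda_k f\|_{L^{p_0}}$. Specializing to $k=0$ gives $\|\Lambda_0 f\|_{L^{\infty}}\lesssim \|f\|_{B_{p_0}^{s_0,\infty}}$, which controls the first term in $\|f\|_{F_{\infty}^{s,q}}$. For general $k$, multiplying by $2^{sk}$ and using the identity $s+d/p_0=s_0$ yields $2^{sk}\|\Lambda_k f\|_{L^{\infty}}\lesssim\|f\|_{B_{p_0}^{s_0,\infty}}$ uniformly in $k$, which is exactly the case $q=\infty$.

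For $0<q\le p_0<\infty$, fix $P\in\mathcal{D}$ with $l(P)=2^{-\mu}<1$ and apply Jensen's inequality (valid since $p_0/q\ge 1$) to the probability measure $dx/|P|$ on $P$:
\[
  \frac{1}{|P|}\int_P |\Lambda_k f|^q\,dx \;\le\; \Bigl(\frac{1}{|P|}\int_P |\Lambda_k f|^{p_0}\,dx\Bigr)^{q/p_0} \;\le\; 2^{\mu dq/p_0}\|\Lambda_k f\|_{L^{p_0}}^q.
\]
Summing over $k\ge\mu$, using $s-s_0=-d/p_0$ together with $\|\Lambda_k f\|_{L^{p_0}}\le 2^{-s_0 k}\|f\|_{B_{p_0}^{s_0,\infty}}$, I get
\[
  \frac{1}{|P|}\int_P \sum_{k\ge\mu} 2^{skq}|\Lambda_k f(x)|^q\,dx \;\le\; 2^{\mu dq/p_0}\|f\|_{B_{p_0}^{s_0,\infty}}^q \sum_{k\ge\mu} 2^{-kdq/p_0} \;\lesssim\; \|f\|_{B_{p_0}^{s_0,\infty}}^q,
\]
the geometric series producing precisely the factor $2^{-\mu dq/p_0}$ that cancels the prefactor. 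Taking $q$-th roots and supremum over $P$, and combining with the bound on $\|\Lambda_0 f\|_{L^{\infty}}$, proves the embedding for all $q\le p_0$.

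For $p_0<q<\infty$, set $f_k:=2^{sk}\Lambda_k f$ for $k\ge 1$ and $f_k:=0$ otherwise, so that each $f_k\in\mathcal{E}(2^k)$. Applying (\ref{embinf}) with $q_1=p_0$, $q_2=q$, $\mu=1$ gives $\mathcal{V}_{1,q}[\mathbf{f}]\lesssim \mathcal{V}_{1,p_0}[\mathbf{f}]$. Since the dyadic cubes with $l(P)<1$ are exactly those with $l(P)\le 1/2$, the quantity $\mathcal{V}_{1,q}[\mathbf{f}]$ coincides with the supremum defining the high-frequency part of $\|f\|_{F_{\infty}^{s,q}}$, while the previous paragraph controls $\mathcal{V}_{1,p_0}[\mathbf{f}]$ by $\|f\|_{B_{p_0}^{s_0,\infty}}$. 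The expected main obstacle is that Jensen's inequality points the wrong way once $q>p_0$; this is precisely what (\ref{embinf}) — itself a direct consequence of the new maximal inequality Corollary \ref{maximal2} — is designed to bypass.
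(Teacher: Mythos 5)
Your proof is correct and follows essentially the same route as the paper: a Jensen/H\"older computation handles $q\le p_0$, and (\ref{embinf}) (i.e.\ Corollary \ref{maximal2}) reduces $q>p_0$ to the exponent $p_0$, with the scaling $s=s_0-d/p_0$ producing the geometric series that cancels $|P|^{-1}$. The only difference is cosmetic: you spell out the $\Lambda_0$ term and the $q=\infty$ case via Bernstein--Nikolskii, which the paper simply dismisses with ``we may assume $q<\infty$.''
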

\begin{proof}
We may assume $q<\infty$.
The proof is independent of the previous results, and quite simple and direct without interpolation technique and discrete characterizations.
It suffices to show \begin{equation}\label{embedpf}
\sup_{P\in\mathcal{D},l(P)<1}\Big(\frac{1}{|P|}\int_P\sum_{k=-\log_2{l(P)}}^{\infty}{2^{skq}\big| \Lambda_kf(x)\big|^q} dx\Big)^{1/q}\lesssim \Vert f\Vert_{B_{p_0}^{s_0,\infty}}
\end{equation}
If $q<p_0$, then H\"older's inequality yields that the left hand side of (\ref{embedpf}) is less than
\begin{equation*}
\sup_{P\in\mathcal{D},l(P)<1}\Big( \frac{1}{|P|^{q/p_0}}\sum_{k=-\log_2{l(P)}}^{\infty}{2^{-kdq/p_0}\big\Vert 2^{s_0k}\big| \Lambda_kf\big|\big\Vert_{L^{p_0}}^{q}}\Big)^{1/q}
\end{equation*} and this is clearly dominated by $\Vert f\Vert_{B_{p_0}^{s_0,\infty}}$.
If $q\geq p_0$ then (\ref{embinf}) proves that the left hand side of (\ref{embedpf}) is bounded by a constant times
\begin{equation*}
\sup_{P\in\mathcal{D},l(P)<1}\Big( \frac{1}{|P|}\sum_{k=-\log_2{l(P)}}^{\infty}{2^{-kd}\big\Vert 2^{s_0k}\big| \Lambda_kf\big|\big\Vert_{L^{p_0}}^{p_0}}\Big)^{1/p_0},
\end{equation*} which is less than $\Vert f\Vert_{B_{p_0}^{s_0,\infty}}$ by the same reason.
\end{proof}
Note that Theorem \ref{fremb} immediately proves that the above Sobolev-type embedding $F_{p_0}^{s_0,q_0}\hookrightarrow F_{p_1}^{s_1,q_1}$ also holds for $p_1=\infty$.

\section{\textbf{Proof of Lemma \ref{lemgoal} and Theorem \ref{maximal1} }}\label{proofmain}

\subsection{{Proof of Lemma \ref{lemgoal}}}
We follow arguments in the proof of Lemma 2.1 in \cite{Pe}. Let $0<r<t<\infty$ and $f_k\in\mathcal{E}(A2^k)$. By translation-invariance it suffices to show 
\begin{equation}\label{reduction0}
\mathfrak{M}_{d/r,2^k}f_k(0)\lesssim_{A,r}\mathcal{M}_t^{k,d(1/r-1/t)}f_k(0).
\end{equation}
Set $\epsilon:=d(1/r-1/t)>0$ and let $0<\delta<1$.
By the mean value theorem we obtain that
\begin{equation*}
|f_k(z)| \leq 2^{-k}\delta \sup_{|y|<2^{-k}\delta}{\big| \nabla f_k(z-y)  \big|}+\Big( \frac{1}{\big( 2^{-k}\delta\big)^d}\int_{|y|<2^{-k}\delta}{|f_k(z-y)|^t}dy  \Big)^{1/t}.
\end{equation*}
We see that
\begin{align*}
\Big( \frac{1}{\big( 2^{-k}\delta\big)^d}\int_{|y|<2^{-k}\delta}{|f_k(z-y)|^t}dy  \Big)^{1/t}&\leq \Big(  \frac{1}{(2^{-k}\delta)^d}\int_{|y|<|z|+2^{-k}}{|f_k(y)|^t}dy    \Big)^{1/t}\\
&\leq \frac{1}{\delta^{d/t}}(1+2^k|z|)^{\epsilon+d/t}\mathcal{M}_t^{k,\epsilon}f_k(0)
\end{align*}
Since $\epsilon+d/t=d/r$ we have
\begin{equation*}
\frac{|f_k(z)|}{(1+2^k|z|)^{d/r}}\leq 2^{-k}\delta \sup_{|y|<2^{-k}\delta}{\frac{|\nabla f_k(z-y)|}{(1+2^k|z|)^{d/r}}}+\frac{1}{\delta^{d/t}}\mathcal{M}_t^{k,\epsilon}f_k(0).
\end{equation*}
It was proved in \cite{Pe} that there exists $C_{A,r}>0$ ( $C_{A,r}=C 2^{d/r}\max{(2A,\frac{1}{2A})}A$ for some $C>0$ ) such that
\begin{equation*}
\sup_{|y|<2^{-k}\delta}{\frac{|\nabla f_k(z-y)|}{(1+2^k|z|)^{d/r}}}\leq C_{A,r}2^k  \sup_{\widetilde{z}}{\frac{|f_k(\widetilde{z})|}{(1+2^k|\widetilde{z}|)^{d/r}}},
\end{equation*}
 and this yields that
\begin{equation*}
(1-C_{A,r}\delta)\mathfrak{M}_{d/r,2^k}f_k(0)\leq \delta^{-d/t}\mathcal{M}_t^{k,\epsilon}f_k(0).
\end{equation*}
By choosing $0<\delta<1$ sufficiently small we can get (\ref{reduction0}).

\subsection{{Proof of Theorem \ref{maximal1}}}
In the proof the constant $A$ plays a minor role and affects the result only up to a constant. Thus we now assume $A=2^{-2}$.
This assumption may change slightly the definition of $\mathcal{M}_r^{k,\epsilon}f(x)$, but it is still acceptable because for a fixed constant $C>0$
\begin{equation*}
\sup_{C2^kl(Q)\leq 1, x\in Q}{\Big(\big(|f|^r\big)_Q\Big)^{1/r}}+\sup_{C2^kl(Q)>1, x\in Q}{(2^kl(Q))^{-\epsilon}\Big(\big(|f|^r\big)_Q\Big)^{1/r}}\lesssim_C\mathcal{M}_r^{k,\epsilon}f(x)
\end{equation*} where $\big(|f|^r\big)_Q:=\frac{1}{|Q|}\int_{Q}|f(y)|dy$.

Suppose $f_k\in\mathcal{E}(2^{k-2})$ and
let $\psi_k$ be a Schwartz function whose Fourier transform takes the value $1$ on the support of $\widehat{f_k}$ and is supported in $\{\xi: |\xi|\leq 2^{{k}}\}$. Then by using the idea in the proof of \cite[Lemma 2.1]{Fr_Ja0} we write
\begin{equation}\label{pointdecompose}
f_k(x)    =\sum_{Q\in\mathcal{D}_{{k}}}{2^{-{k}d}f_k(x_Q)\psi_k(x-x_Q)}.
\end{equation} 

We first claim that for a dyadic cube $P$ with $l(P)\geq 2^{-{k}}$ 
\begin{equation}\label{sub}
\Big(  \sum_{Q\in\mathcal{D}_{{k}}(P)}{|f_k(x_Q)|^q}   \Big)^{1/q}\lesssim \big( 2^{{k}}l(P)  \big)^{d/q}\sup_{R\in\mathcal{D},l(R)=l(P)}{\Big( \frac{1}{|R|}\int_R{|f_k(x)|^q}  dx \Big)^{1/q}}.
\end{equation}
Let  $\sigma>d/q$ and $\gamma$ be a Schwartz function so that $\gamma(0)=1$, $Supp(\widehat{\gamma})\subset \{\xi:|\xi|\leq 1\}$, and $\gamma_k(x)=\gamma(2^{{k}}x)$. For each $Q\in\mathcal{D}_{{k}}$ define
${g_{k,Q}}(x)=f_k(x)\gamma_k(x-x_Q)$.
Observe that ${g_{k,Q}}(x_Q)=f_k(x_Q)$, the Fourier transform of ${g_{k,Q}}$ is supported in $\{\xi:|\xi|\leq 2^{{k+2}}\}$, and for all $y\in Q$  and arbitrary $\sigma>0$
\begin{equation}\label{observe}
|f_k(x_Q)|=|{g_{k,Q}}(x_Q)|\lesssim \mathfrak{M}_{\sigma,2^k}{g_{k,Q}}(y)
\end{equation} uniformly in $k$ and $Q$ due to (\ref{qwqw}).
Thus we have
\begin{equation*}
\sum_{Q\in\mathcal{D}_{{k}}(P)}{|f_{k}(x_Q)|^q\chi_Q(y)}  \lesssim \sum_{Q\in\mathcal{D}_{{k}}(P)}{\big(\mathfrak{M}_{\sigma,2^k}g_{k,Q}(y)\big)^q\chi_Q(y)}
\end{equation*}
By taking an integral in $y$ variable and using the $L^q$ boundedness of $\mathfrak{M}_{\sigma,2^k}$ we obtain
\begin{equation*}
2^{-{k}d}\sum_{Q\in\mathcal{D}_{{k}}(P)}{\big| f_k(x_Q) \big|^q}\lesssim \sum_{Q\in\mathcal{D}_{{k}}(P)}{\Vert  g_{k,Q} \Vert_{L^q}^q}.
\end{equation*}
Furthermore, for $Q\in\mathcal{D}_k(P)$
\begin{align*}
  \Vert  g_{k,Q} \Vert_{L^q}^q  &= {  \int_{\mathbb{R}^d}{|f_k(x)|^q\big|  \gamma\big(2^{{k}}(x-x_Q)\big) \big|^q}dx   }\\
  &\lesssim_M { \sum_{m\in\mathbb{Z}^d}{\int_{P+l(P)m}{|f_k(x)|^q\frac{1}{\big( 1+2^{{k}}|x-x_Q| \big)^{2Mq}}}dx}    }\\
    &\lesssim \sum_{m\in\mathbb{Z}^d}{\frac{1}{\big( 1+|m| \big)^{Mq}}\int_{P+l(P)m}{  |f_k(x)|^q\frac{1}{\big( 1+2^{{k}}|x-x_Q| \big)^{Mq}}   }dx  }
\end{align*} 
for sufficiently large $M$. By putting together and taking the supremum of the integral over $m\in\mathbb{Z}^d$ we derive (\ref{sub}). Here, we used the fact that $ \sum_{Q\in\mathcal{D}_k}{\frac{1}{\big( 1+2^k|x-x_Q|\big)^{Mq}}}\lesssim 1$ for $Mq>d$.

Now fix $\mu\in\mathbb{Z}$ and  $P\in\mathcal{D}_{\mu}$, and then consider
\begin{equation}\label{leftside}
{\Big(  \frac{1}{|P|}\int_P{  \sum_{k=\mu}^{\infty}{  \big( \mathcal{M}_r^{k,\epsilon}f_k(x)  \big)^{q}      }    }dx  \Big)^{1/q}}. 
\end{equation}

For each $n\in\mathbb{Z}^d$ and $P\in\mathcal{D}$  let $P+l(P)n:=\{x+l(P)n:x\in P\}$ and define  $\mathcal{D}_k(P,n)$ to be the subfamily of $\mathcal{D}_k$ that contains any dyadic cubes contained in $P+l(P)n$. When $n=0\in\mathbb{Z}^d$ let $\mathcal{D}_k(P):=\mathcal{D}_{k}(P,0)$.
We decompose (\ref{leftside}) by using
\begin{align*}
\mathcal{M}_r^{k,\epsilon}f_k  &\lesssim \mathcal{M}_r\Big(  \sum_{|m|\leq 4d}{\sum_{Q\in\mathcal{D}_k(P,m)}{2^{-kd}f_k(x_Q)\psi_k(\cdot-x_Q)     }}   \Big)\\
  &\relphantom{=}+\mathcal{M}_r^{k,\epsilon}\Big(  \sum_{|m|> 4d}{\sum_{Q\in\mathcal{D}_k(P,m)}{2^{-kd}f_k(x_Q)\psi_k(\cdot-x_Q)     }}   \Big)\\
  &=:A_k(x)+B_k(x)
\end{align*} where (\ref{pointdecompose}) and (\ref{maxbound}) are applied.
By the $L^q$ boundedness of $\mathcal{M}_r$ 
\begin{align}
&\Big(\frac{1}{|P|}\int_P{\sum_{k=\mu}^{\infty}{\big|A_k(x)\big|^q}}dx\Big)^{1/q}\nonumber\\
&\lesssim \Big(  \frac{1}{|P|}{\sum_{k=\mu}^{\infty}{     \Big\Vert    \sum_{|m|\leq 4d}{\sum_{Q\in\mathcal{D}_k(P,m)}{2^{-kd}f_k(x_Q)\psi_k(\cdot-x_Q)     }}    \Big\Vert_{L^q}^{q}   }}    \Big)^{1/q}\nonumber\\
&\lesssim \sum_{|m|\leq 4d}\Big(  \frac{1}{|P|}{\sum_{k=\mu}^{\infty}{     \Big\Vert   {\sum_{Q\in\mathcal{D}_k(P,m)}{2^{-kd}f_k(x_Q)\psi_k(\cdot-x_Q)     }}    \Big\Vert_{L^q}^{q}   }}    \Big)^{1/q}.\label{fiest}
\end{align}
Let \begin{equation*}
E_{P,m}^0(x):=\big\{  Q\in\mathcal{D}_{k}(P,m):  |x-x_Q|<2^{-{k}}  \big\}.
\end{equation*}
and for each $l\in \mathbb{N}$
\begin{equation*}
E_{P,m}^l(x):=\big\{  Q\in\mathcal{D}_{k}(P,m): 2^{-{k}}2^{l-1}\leq |x-x_Q|<2^{-{k}}2^{l}  \big\}.
\end{equation*}
Then for $0<s<\min{(1,r)}$
\begin{align*}
 & \sum_{Q\in\mathcal{D}_{k}(P,m)}{2^{-{k}d}|f_k(x_Q)|\big|\psi_k(x-x_Q)\big|}    
 \lesssim_M\sum_{l=0}^{\infty}{2^{-lM}\Big(\sum_{Q\in E_{P,m}^l(x)}{  |f_k(x_Q)|^s }\Big)^{1/s}}\\
&= \sum_{l=0}^{\infty}{{2^{-l(M-d/s)}}\Big(\frac{1}{2^{(l-k)d}}\int_{\mathbb{R}^d}\sum_{Q\in E_{P,m}^l(x)}{  |f_k(x_Q)|^s\chi_Q(y) }dy\Big)^{1/s}}\\
&\lesssim \mathcal{M}_s\Big( \sum_{Q\in\mathcal{D}_{k}(P,m)}{|f_k(x_Q)|\chi_Q}  \Big)(x)
\end{align*} for $M>d/s$.
By applying the $L^q$ boundedness of $\mathcal{M}_s$, (\ref{observe}), and the argument we did for (\ref{sub}), we obtain
\begin{equation*}
\Big\Vert    \sum_{Q\in\mathcal{D}_{k}(P,m)}{2^{-{k}d}|f_k(x_Q)|\big|\psi_k(\cdot-x_Q)\big|}    \Big\Vert_{L^q}^q \lesssim_N \sum_{n\in\mathbb{Z}^d}{\frac{1}{\big( 1+|n| \big)^{Nq}}\int_{P+l(P)m+l(P)n}{{|f_k(x)|^q}}dx}
\end{equation*} for sufficiently large $N$, which proves
\begin{align*}
(\ref{fiest})&\lesssim \sum_{|m|\leq 4d}\Big(\sum_{n\in\mathbb{Z}^d}{ \frac{1}{(1+|n|)^{Nq}}{\frac{1}{|P|} \int_{P+l(P)m+l(P)n}{\sum_{k=\mu}^{\infty}{|f_k(x)|^q}}dx   }  }    \Big)^{1/q}\\
 &\lesssim  \sup_{R\in\mathcal{D}_{\mu}}{\Big(  \frac{1}{|R|}\int_R{  \sum_{k=\mu}^{\infty}{   |f_k(x)|^q   }    }dx  \Big)^{1/q}}. 
\end{align*}

Now it remains to show
\begin{equation}\label{remaingoal}
 \Big[  \frac{1}{|P|}\int_P{\sum_{k=\mu}^{\infty}{     \big|B_k(x) \big|^q}}dx    \Big]^{1/q}\lesssim \sup_{R\in\mathcal{D}_{\mu}}{\Big(  \frac{1}{|R|}\int_R{  \sum_{k=\mu}^{\infty}{   |f_k(x)|^q   }    }dx  \Big)^{1/q}}. 
\end{equation}
The left hand side of the inequality is bounded by
\begin{equation*}
\Big(  \frac{1}{|P|}\int_P{ \sum_{k=\mu}^{\infty}{   \big(\mathcal{I}_P^{k,r}[f_k](x) \big)^q     }      dx}   \Big)^{1/q}
+\Big(  \frac{1}{|P|}\int_P{ \sum_{k=\mu}^{\infty}{  2^{-k\epsilon q} \big(\mathcal{J}_P^{k,r,\epsilon}[f_k](x) \big)^q     }      dx}   \Big)^{1/q}
\end{equation*} 
where 
\begin{equation*}
\mathcal{I}_P^{k,r}[f_k](x):=\sup_{V:x\in V,2^kl(V)\leq 1}{\Big(\frac{1}{|V|}\int_V{\Big|\sum_{|m|>4d}{\sum_{Q\in\mathcal{D}_k(P,m)}{2^{-kd}f_k(x_Q)\psi_k(y-x_Q)}}    \Big|^r}dy \Big)^{1/r}}
\end{equation*}
\begin{equation*}
\mathcal{J}_P^{k,r,\epsilon}[f_k](x):=\sup_{V:x\in V,2^kl(V)> 1}{\Big(\frac{1}{|V|^{1+\epsilon r/d}}\int_V{\Big|\sum_{|m|> 4d}{\sum_{Q\in\mathcal{D}_k(P,m)}{2^{-kd}f_k(x_Q)\psi_k(y-x_Q)}}    \Big|^r}dy \Big)^{1/r}}.
\end{equation*}

We observe that if $l(V)\leq 2^{-k}$, $x\in V\cap P$, $x_Q\in P+l(P)m$, $y\in V$, and $|m|>4d$, then $|y-x_Q|\geq |x-x_Q|-|x-y|\gtrsim |m|l(P)$ and thus
\begin{equation*}
| \psi_k(y-x_Q)|\lesssim_L\frac{2^{kd}}{(2^kl(P)|m|)^{2L}}
\end{equation*} for any $L>0$.
Choose $L>d/q$ and then 
\begin{align}\label{method2}
\Big(  \frac{1}{|P|}\int_P{ \sum_{k=\mu}^{\infty}{   \big(\mathcal{I}_P^{k,r}[f_k](x) \big)^q     }      dx}   \Big)^{1/q}&\lesssim \Big(  \sum_{k=\mu}^{\infty}{  \Big( \sum_{|m|>4d}{\sum_{Q\in\mathcal{D}_k(P,m)}{|f_k(x_Q)|\frac{1}{\big( 2^kl(P)|m|  \big)^{2L}}}}     \Big)^{q}     }  \Big)^{1/q}\nonumber\\
&\lesssim_L \Big(    \sum_{k=\mu}^{\infty}{ \sum_{|m|>4d}{\frac{1}{|m|^{Lq}}\sum_{Q\in\mathcal{D}_k(P,m)}{|f_k(x_Q)|^q\frac{1}{\big(2^kl(P)\big)^{Lq}}}    }     }  \Big)^{1/q}\nonumber\\
&\lesssim \sup_{R\in\mathcal{D}_{\mu}}{\Big(  \frac{1}{|R|}\int_R{  \sum_{j=\mu}^{\infty}{   |f_j(x)|^q   }    }dx  \Big)^{1/q}}  
\end{align} where the second inequality follows from H\"older's inequality if $q>1$, $l^q\hookrightarrow l^1$ if $q\geq 1$, and the third one from (\ref{sub}) for sufficiently large $L>0$.

For the remaining term (which is corresponding to $\mathcal{J}_P^{k,r,\epsilon}[f_k]$), if $|m|\geq 10|x-y|/l(P)$, $x\in P\cap V$, and $x_Q\in P+l(P)|m|$ then
$|y-x_Q|\geq |x-x_Q|-|y-x|\gtrsim l(P)|m|$.
Therefore for $M>d+d/q$
\begin{align*}
& \sup_{V:x\in V, 2^kl(V)>1}\Big(\frac{1}{|V|^{1+\epsilon r/d}}\int_V{\Big|\sum_{|m|>4d, |m|\geq 10|x-y|/l(P)}{\sum_{Q\in\mathcal{D}_k(P,m)}{2^{-kd}f_k(x_Q)\psi_k(y-x_Q)}} \Big|^r}dy \Big)^{1/r}\\
&\lesssim_L 2^{k\epsilon}\sum_{|m|>4d}\sum_{Q\in\mathcal{D}_k(P,m)}{|f_k(x_Q)|\frac{1}{(2^kl(P)|m|)^{2L}}}
\end{align*} for any $L>0$.
Then we obtain the bound (\ref{method2}) for  $|m|\geq 10|x-y|/l(P)$  by the same way.

Similary, if $|m|\leq 10^{-1}|x-y|/l(P)$, then we have $|y-x_Q|\geq |y-x|-|x-x_Q|\gtrsim l(P)|m|$ for $x\in P\cap V$, and $x_Q\in P+l(P)|m|$.  By applying the same technique above we obtain the desired result.

Now consider the case $10^{-1} |x-y|/l(P)<|m|<10|x-y|/l(P)$ in $\mathcal{J}_P^{k,r,\epsilon}[f_k]$. 
In this case we observe that for $x,y\in V$
\begin{equation}\label{mdmd1}
|m|l(P)\lesssim l(V).
\end{equation} 
Let $\mathcal{B}_{l(P)}(x,y):=\{m\in\mathbb{Z}^d:|m|>4d, 10^{-1}|x-y|/l(P)<|m|<10|x-y|/l(P)\}$.
Then for $L>0$ sufficiently large
\begin{align*}
& \frac{1}{|V|^{1+\epsilon r/d}}\int_V{\Big|\sum_{m\in\mathcal{B}_{l(P)}(x,y)}{\sum_{Q\in\mathcal{D}_k(P,m)}{2^{-kd}f_k(x_Q)\psi_k(y-x_Q)}} \Big|^r}dy\\
&\lesssim_L \frac{1}{|V|^{1+\epsilon r/d}}\int_V{\Big(\sum_{m\in\mathcal{B}_{l(P)}(x,y)}{\sum_{Q\in\mathcal{D}_k(P,m)}{|f_k(x_Q)|\frac{1}{(1+2^k|y-x_Q|)^{2L}}}} \Big)^r}dy\\
&\lesssim \frac{1}{|V|^{1+\epsilon r/d}}\int_V{\sum_{m\in\mathcal{B}_{l(P)}(x,y)}{\sum_{Q\in\mathcal{D}_k(P,m)}{|f_k(x_Q)|^r\frac{1}{(1+2^k|y-x_Q|)^{Lr}}}} }dy\\
&\lesssim \int_V{\sum_{m\in\mathcal{B}_{l(P)}(x,y)}{\sum_{Q\in\mathcal{D}_k(P,m)}{\frac{1}{(|m|l(P))^{d+\epsilon r}}|f_k(x_Q)|^r\frac{1}{(1+2^k|y-x_Q|)^{Lr}}}} }dy\\
&\lesssim \frac{1}{l(P)^{\epsilon r}}\sum_{|m|>4d}{\frac{1}{|m|^{d+\epsilon r}}\frac{1}{(2^kl(P))^d}\sum_{Q\in\mathcal{D}_k(P,m)}{|f_k(x_Q)|^r}}
\end{align*}
where the second one follows from H\"older's inequality if $r>1$, $l^r\hookrightarrow l^1$ if $r\leq 1$, and the third one from (\ref{mdmd1}).
Now by using H\"older's inequality with $q/r>1$ and (\ref{sub}) the last expression is less than
\begin{equation*}
2^{\mu\epsilon r}\sup_{R\in\mathcal{D}_{\mu}}\Big(\frac{1}{|R|}\int_R{\sum_{j=\mu}^{\infty}{|f_j(x)|^q}}dx \Big)^{r/q}
\end{equation*} and finally, this completes the estimate of the term corresponding to $\mathcal{J}_P^{k,r,\epsilon}[f_k]$.
That is,
\begin{equation*}
\Big(  \frac{1}{|P|}\int_P{ \sum_{k=\mu}^{\infty}{  2^{-k\epsilon q} \big(\mathcal{J}_P^{k,r,\epsilon}[f_k](x) \big)^q     }      dx}   \Big)^{1/q}\lesssim \sup_{R\in\mathcal{D}_{\mu}}\Big(\frac{1}{|R|}\int_R{\sum_{j=\mu}^{\infty}{|f_j(x)|^q}}dx \Big)^{1/q}.
\end{equation*}
Combining this with (\ref{method2}) we obtain (\ref{remaingoal}).

\section{\textbf{Some examples}}\label{example}

In what follows let $\eta$, $\widetilde{\eta}$ denote Schwartz functions so that $\eta \geq 0$, $\eta(x)\geq c$ on $\{x:|x|\leq 1/100\}$ for some $c>0$, $Supp(\widehat{\eta})\subset \{\xi: |\xi|\leq 1/100\}$, $\widehat{\widetilde{\eta}}(\xi)=1$ for $|\xi|\leq 1/100$, and $Supp(\widehat{\widetilde{\eta}})\subset \{\xi: |\xi|\leq 1/10\}$.

\subsection{Proof of Theorem \ref{counterex}}
We construct a sequence of functions $f_k\in\mathcal{E}(2^k)$ satisfying $(\ref{uuu})<\infty$, but $(\ref{yyy})=\infty$.
One key idea is that for arbitrary $\alpha>0$  there exists $M=M(\alpha)$ such that
\begin{equation}\label{boundd}
\sum_{k=1}^{\infty}{\dfrac{1}{\big( 1+|2^{-k}x-k^{\alpha}|\big)^M}}
\end{equation} is a bounded function in $\mathbb{R}$. 
Indeed, if $|x|<4\cdot 2^{\alpha}$ then $1+|2^{-k}x-k^{\alpha}|\gtrsim k^{\alpha}$ for $k\geq 3$ and thus
\begin{equation*}
(\ref{boundd})\lesssim 2+\sum_{k=3}^{\infty}{{k^{-\alpha M}}}<\infty, \quad \text{for } M>1/\alpha.
\end{equation*}
If $2^{n}n^{\alpha}\leq |x|<2^{n+1}(n+1)^{\alpha}$ for $n\geq 2$ then we have
$1+|2^{-k}x-k^{\alpha}|\gtrsim k^{\alpha}$ for $k\leq n-1$ or $k\geq n+2$. This yields that for $M>1/\alpha$
\begin{equation*}
(\ref{boundd})\lesssim 2+\sum_{k=1}^{n-1}{k^{-\alpha M}}+\sum_{k=n+2}^{\infty}{k^{-\alpha M}}<\infty \quad \text{uniformly in } n\geq 2.
\end{equation*}

On the other hand, for any $M> 1$,
\begin{equation*}
\sum_{k=1}^{\infty}{\dfrac{1}{\big( 1+|x-k^{\alpha}|\big)^M}}
\end{equation*}
 is bounded only when $\alpha\geq 1$.

For $\alpha>0$ to be chosen later, define 
\begin{equation*}
f_k(x):={\eta}\big(2^{-k}(x-2^kk^{\alpha}e_1)  \big)e^{2\pi i\langle x,2^ke_1\rangle} \quad \text{for}\quad k\geq 1
\end{equation*} and set $f_k:=0$ for $k<1$ where $e_1=(1,0,\dots,0)\in \mathbb{R}^d$.
Then we observe that for $P\in\mathcal{D}$ and sufficiently large $M>0$
\begin{equation*}
\Big( \frac{1}{|P|}\int_P{\sum_{k=-\log_2{l(P)}}^{\infty}{\big|f_k(x)\big|^q}}dx \Big)^{1/q} \lesssim \Big( \frac{1}{|P|}\int_P{\sum_{k=1}^{\infty}{\dfrac{1}{\big(1+\big|2^{-k}x-k^{\alpha}e_1\big|\big)^{Mq}}}}dx \Big)^{1/q}\lesssim_{\alpha}1
\end{equation*} uniformly in $P$.

On the other hand, 
\begin{equation*}
\sup_{P\in\mathcal{D}}\Big( \frac{1}{|P|}\int_P{\sum_{k=-\log_2{l(P)}}^{\infty}{\big|\mathcal{M}_rf_k(x)\big|^q}}dx \Big)^{1/q}\gtrsim  \Big( \int_{[0,1/2]^d}{\sum_{k=1}^{\infty}{\big|\mathcal{M}_rf_k(x)\big|^q}}dx \Big)^{1/q}
\end{equation*}
and for $x \in [0,1/2]^d$
\begin{align*}
\mathcal{M}_rf_k(x)&\gtrsim \Big( \dfrac{1}{2^{kd}k^{\alpha d}}\int_{|x-y|\lesssim 2^kk^{\alpha}}{       \big| {\eta}\big(2^{-k}(y-2^kk^{\alpha}e_1)  \big)   \big|^r}dy   \Big)^{1/r}\\
    &\gtrsim \Big( \dfrac{1}{2^{kd}k^{\alpha d}}\int_{|y-2^{k}k^{\alpha}e_1|\lesssim 2^k}{       \big| {\eta}\big(2^{-k}(y-2^kk^{\alpha}e_1)  \big)   \big|^r}dy   \Big)^{1/r}\gtrsim k^{-\alpha d/r}.
\end{align*}
By choosing $\alpha=\frac{r}{qd}$ we obtain
\begin{equation*}
\sup_{P:l(P)<1}\Big( \frac{1}{|P|}\int_P{\sum_{k=-\log_2{l(P)}}^{\infty}{\big|\mathcal{M}_rf_k(x)\big|^q}}dx \Big)^{1/q}\gtrsim \Big(  \sum_{k=1}^{\infty}{\frac{1}{k}} \Big)^{1/r}  =\infty.
\end{equation*}

\subsection{{Sharpness of Corollary \ref{maximal2}}}
We apply the idea in \cite[Section 5]{Ch_Se} to prove that the condition $r<q$ is necessary in Corollary \ref{maximal2}.
Fix $\mu\in \mathbb{Z}$.
Let $M(=M_d)>0$ be sufficiently large (so that $\sqrt{d}\ll 2^M$) and let $\beta$ be a nonnegative Schwartz function such that $\beta(x)\geq 1$ on $[-2^{-M},2^{-M}]^d$ and $Supp(\widehat{\beta})\subset \{\xi:  |\xi|\leq 1\}$.
Let $\beta_k:=2^{kd}\beta(2^k\cdot)$.
We fix $N(=N_{M,d,\mu})$ sufficiently large and for $k\geq N+\mu$ let $\mathcal{Z}_{k,N}^{d,\mu}:=\{0,1,\dots,2^{k-N-\mu}-1\}^d$. For each $j=(j_1,\dots,j_d)\in \mathcal{Z}_{k,N}^{d,\mu}$ we set
\begin{equation*}
Q_{k,j}:=[j_12^{-k+N},j_12^{-k+N}+2^{-k-M}]\times\dots\times [j_d2^{-k+N},j_d2^{-k+N}+2^{-k-M}] .   
\end{equation*}
Denote by $\chi_{k,j}$ the characteristic function of $Q_{k,j}$ and let $ h_k^{\mu}:=\sum_{j\in\mathcal{Z}_{k,N}^{d,\mu}}{\chi_{k,j}}$.
Then define $f_k^{\mu}:=h_k^{\mu}\ast \beta_k$ so that $Supp(\widehat{f_{k}^{\mu}})\subset \{\xi:  |\xi|\leq 2^k\}$.

Our claim is that
\begin{equation}\label{upper1}
\sup_{P\in\mathcal{D}_{\mu}}{\Big(\frac{1}{|P|}\int_P \sum_{k=N+\mu}^{2^{Nd}}{|f_k^{\mu}(x)|^q} dx   \Big)^{1/q}}\lesssim 1
\end{equation} uniformly in $N$,
and 
\begin{equation}\label{lower1}
\sup_{P\in\mathcal{D}_{\mu}}{\Big(\frac{1}{|P|}\int_P \sum_{k=N+\mu}^{2^{Nd}}{\big(\mathfrak{M}_{\sigma,2^k}f_k^{\mu}(x)\big)^q} dx   \Big)^{1/q}}\gtrsim_M \max{\{2^{N(d/q-\sigma)},N^{1/q}   \}}
\end{equation} for $\sigma\leq d/q$.

We first observe the pointwise estimate $\big|\beta_k\ast h_k^{\mu}(x)\big|\lesssim \mathcal{M}_sh_k^{\mu}(x)$ for $0<s<q$, which is proved in \cite{Ch_Se}. 
Then the $L^q$ boundedness of $\mathcal{M}_s$ yields that the left hand side of (\ref{upper1}) is less than a constant times
\begin{equation*}
{\Big( 2^{\mu d}\sum_{k=N+\mu}^{2^{Nd}}{\int_{\mathbb{R}^d}{ h_k^{\mu}(x)  }dx}\Big)^{1/q}}
\end{equation*} and
a straightforward computation gives that this is bounded above uniformly in  $N$.

To establish $(\ref{lower1})$ we see that
$f_k^{\mu}\geq \beta_k\ast \chi_{k,j}$ for each $j$ since $\beta_k\geq 0$,  and this proves
\begin{equation*}
\mathfrak{M}_{\sigma,2^k}f_k^{\mu}(x)\geq \mathfrak{M}_{\sigma,2^k}\big(\beta_k\ast\chi_{k,j}\big)(x).
\end{equation*}
Moreover, let $c_{k,j}$ be the center of $Q_{k,j}$ and observe that $\beta_k\ast h_{k,j}^{\mu}(c_{k,j})\geq 2^{-Md}$. Here, we used the lower bound of $\beta$ on $[-2^M,2^M]^d$.
Then as in \cite{Ch_Se}, we can prove that for $|x-c_{k,j}|\leq 2^{-k+N-2}$
\begin{equation*}
\mathfrak{M}_{\sigma,2^k}f_k^{\mu}(x)\geq \mathfrak{M}_{\sigma,2^k}(\beta_k\ast \chi_{k,j})(x)\geq 2^{-Md}\frac{1}{\big( 1+2^k|x-c_{k,j}|\big)^{\sigma}}.
\end{equation*} 
By using this estimate we obtain that the left hand side of (\ref{lower1}) is bounded below by
\begin{align*}
& \Big(2^{\mu d}\sum_{k=N+\mu}^{2^{Nd}}{ \int_{[0,2^{-\mu}]^d}{\big( \mathfrak{M}_{\sigma,2^k}f_k^{\mu}(x)\big)^q}dx   } \Big)^{1/q}\nonumber\\
&\gtrsim_M \Big(2^{\mu d}\sum_{k=N+\mu}^{2^{Nd}}{\sum_{j\in\mathcal{Z}_{k,N}^{d,\mu}}{\int_{ |x-c_{k,j}|\leq 2^{-k+N-2}}{\frac{1}{\big(1+2^k|x-c_{k,j}| \big)^{\sigma q}}}dx  }} \Big)^{1/q}
\end{align*}
We see that the last expression is $\gtrsim 2^{N(d/q-\sigma)}$ if $\sigma<d/q$ and $\gtrsim N^{1/q}$ if $\sigma=d/q$. Thus, by taking $N$ sufficiently large we can prove the sharpness.

\section*{Acknowledgement}

{
The author would like to thank his advisor Andreas Seeger for the guidance and helpful discussions.
The author also thanks Professor Hans Triebel for his helpful suggestions and comments on the subjects.
The author thanks the referees for carefully reading the manuscript.
The author was supported in part by NSF grant DMS 1500162}.

\end{document}